\DeclareMathOperator*{\esssup}{ess\,sup}
\def\RR{\hbox{I\kern-.2em\hbox{R}}}
\numberwithin{equation}{section}
\newtheorem{Th}{Theorem}[section]
\newtheorem{guess}[Th]{Theorem}
\newtheorem{uess}[Th]{Lemma}
\newtheorem{corollary}[Th]{Corollary}
\newtheorem{definition}[Th]{Definition}
\newtheorem{example}[Th]{Example}
\newtheorem{remark}[Th]{Remark}
\newtheorem{prop}[Th]{Proposition}
\begin{document}

\begin{frontmatter}

\title{On stability of linear neutral  differential equations in the Hale form}

%\author{Leonid  Berezansky \\
%Department of Mathematics,
%Ben-Gurion University of the Negev, \\
%Beer-Sheva 84105, Israel \\
%and \\
%Elena Braverman
%\\
%Department of Mathematics and Statistics,
%University of Calgary, \\
%2500 University Drive N.W., Calgary, AB T2N 1N4, Canada
%}

\author[label1]{Leonid Berezansky}
\author[label2]{Elena Braverman}
\address[label1]{Dept. of Math.,
Ben-Gurion University of the Negev,
Beer-Sheva 84105, Israel}
\address[label2]{Dept. of Math. and Stats., University of
Calgary,2500 University Drive N.W., Calgary, AB, Canada T2N 1N4; e-mail
maelena@ucalgary.ca, phone 1-(403)-220-3956, fax 1-(403)--282-5150 (corresponding author)}

%\date{\today}
%\date{}

%\author[label1]{Leonid Berezansky}
%\author[label2]{Elena Braverman}
%\address[label1]{Dept. of Math.,
%Ben-Gurion University of the Negev,
%Beer-Sheva 84105, Israel}
%\address[label2]{Dept. of Math. and Stats., University of
%Calgary,2500 University Drive N.W., Calgary, AB, Canada T2N 1N4; e-mail

%\thanks{
%E. Braverman was partially supported by the NSERC research grant RGPIN-2015-05976}

%\footnotetext[1]{e-mail maelena@ucalgary.ca, phone 1-(403)-220-3956, fax 1-(403)--282-5150 (corresponding 
%author)}

\begin{abstract}
%\abstract 
We present  new explicit exponential stability conditions
for the linear scalar neutral equation with two variable coefficients and delays
$$
(x(t)-a(t)x(g(t)))'=-b(t)x(h(t)),
$$
where
$|a(t)|<1$, $b(t)\geq 0$, $h(t)\leq t$, $g(t)\leq t$, 
in the case when the delays $t-h(t)$, $t-g(t)$ are bounded, as well as an asymptotic stability
condition, if the delays can be unbounded.
%and for its generalizations, including equations with more than two delays,
%integro-differential equations and equations with a distributed delay.
\end{abstract}
%\endabstract
 
%\begin{document}
%\maketitle

%\keywords
\begin{keyword}
%{\bf Keywords:} 
neutral  equations in the Hale form, neutral pantograph equation, exponential stability, solution
estimates, unbounded delays, asymptotic stability %integro-differential equations, distributed delay.
%\endkeywords

%\subjclass
\noindent
{\bf AMS subject classification:} 
34K40, 34K20, 34K06, 45J05

\end{keyword}

\end{frontmatter}

%\thanks
%E. Braverman was partially supported by the NSERC research grant RGPIN-2015-05976.   
%\endthanks

\section{Introduction}

Investigation of linear neutral delay differential equations has a long history. The term ``neutral equation"
was introduced by G.~Kamenskii, and the first results were obtained by Russian mathematicians
in the 60ies, the review of them can be found in \cite{Zverkin}. Since then, many papers and monographs on the theory and applications
of neutral equations appeared, see, for example,  \cite{Elsgolts, Gil, Gop, GL, H, KolmNos,  Kuang}.

In this paper we consider the equation 
\begin{equation}\label{1}
(x(t)-a(t)x(g(t)))'+ b(t)x(h(t))=0,
\end{equation}
and call it ``the neutral differential equation in the form of Hale", due to 
essential results on this class of equations obtained by J. Hale \cite{H}. 
Another class of neutral equations including several delayed terms with
a derivative was studied in \cite{AS, KolmMysh}. 
In \cite{H} and many other papers, the authors study linear and nonlinear equations in the Hale form
under the assumption that all the parameters of equations and solutions are continuous functions.

In \cite{H}, the solution of (\ref{1}) was assumed to satisfy the integral equation
\begin{equation}\label{1add}
x(t)-a(t)x(g(t))+\int_{t_0}^t b(s)x(h(s))ds=0,
\end{equation}
which allowed to consider continuous $a,b,h,g$. We study
equation \eqref{1}, where all the functions involved in the equation, as well as solutions, are Lebesgue measurable functions, and \eqref{1add} holds. 
Such equations were investigated in the recent monograph \cite{Gil}, where in particular existence and uniqueness
results were established. We will use these results without further discussion.
The aim of the present paper is to obtain explicit asymptotic stability tests for equation \eqref{1}.
%and its generalizations, including integro-differential neutral equations
%and neutral equations with a distributed delay. 

The main method to study stability for neutral equations is the construction
of Lyapunov-Krasovskii functions and functionals, see  \cite{Gop, KolmMysh, KolmNos, Shaikhet}.
%We illustrate  explicit stability results obtained by this method with the following results.
Propositions~\ref{proposition3} and \ref{proposition1} below are obtained by this method.

The results of \cite[Theorem 5.1.1]{Gop} can be applied to an autonomous
neutral equation
\begin{equation}\label{04}
\left( x(t)  - a x(t-\sigma) \right)'
 =-b_0x(t)- b x(t-\tau)
\end{equation}
where
$b_0>0$, $\tau\geq 0$, $\sigma\geq 0$, $b\tau\neq 0$, $a\sigma\neq 0$.

\begin{prop}
\label{proposition3} \cite[Theorem 5.1.1]{Gop}
Assume that $b_0>0, b+b_0>0, |b| \tau < 1-|a|$. 
Then all solutions of (\ref{04}) satisfy $\displaystyle \lim_{t\rightarrow\infty}x(t)=0$.
\end{prop}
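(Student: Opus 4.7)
The plan is to apply the Lyapunov-Krasovskii method, built on a transformed variable chosen so that the $x(t-\tau)$ contributions in the derivative telescope away. I would introduce
$$w(t) := x(t) - a\,x(t-\sigma) - b \int_{t-\tau}^{t} x(s)\, ds.$$
Differentiating $w$ and using the equation, the derivative of the integral correction yields $b[x(t) - x(t-\tau)]$, which combines exactly with $-b_0 x(t) - b x(t-\tau)$ to produce the clean identity
$$w'(t) = -(b_0+b)\, x(t).$$
Since $|b|\tau < 1-|a|$ implies $|a|+|b|\tau < 1$, taking the supremum in $x(t) = w(t) + a\, x(t-\sigma) + b\int_{t-\tau}^t x(s)\,ds$ gives $\sup_{s\leq t}|x(s)| \leq (1-|a|-|b|\tau)^{-1}\bigl(\sup_{s \leq t}|w(s)| + C_0\bigr)$, where $C_0$ collects the initial data. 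So boundedness of $w$ implies boundedness of $x$.

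Next, with $c := b_0+b > 0$, I would consider the functional
$$V(t) = \tfrac12 w(t)^2 + \tfrac{c|a|}{2}\int_{t-\sigma}^t x(s)^2\, ds + \tfrac{c|b|}{2}\int_{t-\tau}^t \bigl(s - (t-\tau)\bigr) x(s)^2\, ds.$$
Compute $w w' = -c\, x(t)\, w(t) = -c\,x(t)^2 + c a\, x(t)x(t-\sigma) + cb\, x(t)\int_{t-\tau}^t x(s)\, ds$. Applying $2|pq| \le p^2 + q^2$ to the cross term $x(t) x(t-\sigma)$ and $2|x(t) x(s)| \le x(t)^2 + x(s)^2$ inside the integral,
$$w w' \leq -c\Bigl(1 - \tfrac{|a|}{2} - \tfrac{|b|\tau}{2}\Bigr) x(t)^2 + \tfrac{c|a|}{2}\, x(t-\sigma)^2 + \tfrac{c|b|}{2} \int_{t-\tau}^t x(s)^2\, ds.$$
The derivatives of the two correction integrals are $\tfrac{c|a|}{2}[x(t)^2 - x(t-\sigma)^2]$ and $\tfrac{c|b|}{2}[\tau x(t)^2 - \int_{t-\tau}^t x(s)^2 ds]$, chosen precisely so that the delayed squared terms cancel. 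Adding everything gives
$$V'(t) \leq -\,(b_0+b)(1 - |a| - |b|\tau)\, x(t)^2,$$
with a strictly positive coefficient by hypothesis.

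To conclude, $V$ is nonincreasing, so $w$ is bounded, and by the supremum estimate so is $x$. Integrating the bound on $V'$ gives $x \in L^2(0,\infty)$. From the equation, $y(t) := x(t) - a\,x(t-\sigma)$ has $y'(t) = -b_0 x(t) - b x(t-\tau)$ bounded, and iterating $x'(t) = y'(t) + a\, x'(t-\sigma)$ with $|a|<1$ shows that $x'$ is bounded for large $t$, so $x^2$ is uniformly continuous. Barbalat's lemma then delivers $x(t) \to 0$. The main obstacle is spotting the integral correction $-b\int_{t-\tau}^t x(s)\,ds$ in the definition of $w$: without it $w'$ still carries $x(t-\tau)$ and the Lyapunov calculation picks up delayed squared terms that cannot be matched to the correction integrals, producing a condition in $|b|$ rather than the sharp $|b|\tau$ and losing all scaling with the delay.
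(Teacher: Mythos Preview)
The paper does not supply its own proof of this proposition: it is quoted verbatim from Gopalsamy \cite[Theorem 5.1.1]{Gop}, and the only remark the paper makes is that Propositions~\ref{proposition3} and \ref{proposition1} ``are obtained by this method,'' meaning the Lyapunov--Krasovskii technique. Your argument is precisely a Lyapunov--Krasovskii proof, and in fact the transformation $w(t)=x(t)-a\,x(t-\sigma)-b\int_{t-\tau}^{t}x(s)\,ds$ together with the functional you chose is essentially Gopalsamy's construction, so there is no methodological difference to discuss.

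Your computation is correct; the cancellations you describe are exactly what happen, and the inequality $V'(t)\le -(b_0+b)(1-|a|-|b|\tau)\,x(t)^2$ holds. One small technical point: the line ``iterating $x'(t)=y'(t)+a\,x'(t-\sigma)$ \dots shows that $x'$ is bounded'' tacitly assumes $x$ is differentiable, which in the Hale framework need not hold for merely continuous initial data (only $y=x-a\,x(\cdot-\sigma)$ is guaranteed to be $C^1$). The fix is immediate and does not change your scheme: since $y'$ is bounded, $y$ is uniformly continuous; then from $x(t)=y(t)+a\,x(t-\sigma)$ and $|a|<1$ one iterates on moduli of continuity (rather than on derivatives) to get that $x$ is uniformly continuous on $[0,\infty)$, whence $x^2$ is uniformly continuous and Barbalat's lemma applies. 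With that adjustment the proof is complete.
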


Consider (\ref{04}) with variable coefficients
\begin{equation}\label{02}
(x(t)+a(t)x(t-\sigma))'+b_0(t)x(t)+b(t)x(t-\tau)=0,
\end{equation}
where $\sigma\leq \tau$, $a,b_0,b \in C([t_0,\infty),[0,\infty))$. 

\begin{prop}\label{proposition1} \cite{AgGr}
Assume that there exist constants $p_1,p_2,q_1,q_2,a_0,A$ such that
$$
0\leq p_1\leq b_0(t)\leq p_2, ~~0\leq q_1\leq b(t)\leq q_2, ~~0\leq a(t)\leq a_0<1,~~ |a'(t)|\leq A,
$$
$\sigma\leq \tau$, $a,b_0,b \in C([t_0,\infty),[0,\infty))$, 
and $c$ is differentiable with a locally bounded derivative.

If at least one of the following conditions 

a) $p_1+q_1>(p_2+q_2)(a_0+q_2\tau)$;

b) $p_1>q_2+ a_0(p_2+q_2)$
\\
holds then every solution of (\ref{02}) satisfies $\displaystyle \lim_{t \to +\infty} x(t) = 0$.
\end{prop}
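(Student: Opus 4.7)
The plan is to view the proposition as a Lyapunov--Krasovskii stability result and to construct an auxiliary functional whose derivative along solutions is dominated by a negative multiple of $x^2(t)$ (plus terms that disappear after integration by parts). Set $y(t)=x(t)+a(t)x(t-\sigma)$, so that equation \eqref{02} takes the compact form
\begin{equation*}
y'(t)=-b_0(t)\,x(t)-b(t)\,x(t-\tau),
\end{equation*}
and the regularity assumption $|a'(t)|\le A$ together with $a(t)\le a_0<1$ guarantees that $x$ and $y$ are comparable in size and that $y$ inherits enough regularity from $x$ to justify differentiation of $y^2$. From a bound $V'(t)\le -\gamma x^2(t)$ with $\gamma>0$, boundedness of $y$ and hence of $x$, uniform continuity of $x^2$, and Barb\v{a}lat's lemma, one concludes $x(t)\to 0$.

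For condition (b), I would try the simplest candidate
\begin{equation*}
V(t)=y^2(t)+\alpha\!\int_{t-\tau}^{t}x^2(s)\,ds+\beta\!\int_{t-\sigma}^{t}x^2(s)\,ds .
\end{equation*}
Computing $2y(t)y'(t)$ generates the three cross products $b(t)x(t)x(t-\tau)$, $a(t)b_0(t)x(t)x(t-\sigma)$, and $a(t)b(t)x(t-\sigma)x(t-\tau)$, each of which I would bound by $2|uv|\le u^2+v^2$. Choosing $\alpha=(1+a_0)q_2$ and $\beta=a_0(p_2+q_2)$ cancels the coefficients of $x^2(t-\tau)$ and $x^2(t-\sigma)$, and the remaining coefficient of $x^2(t)$ is at most $-2p_1+2q_2+2a_0(p_2+q_2)$, which is strictly negative precisely under (b).

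For condition (a), which is sharper when $\tau$ is small but involves the factor $q_2\tau$, I would exploit the identity
\begin{equation*}
b(t)x(t-\tau)=b(t)x(t)-b(t)\!\int_{t-\tau}^{t}x'(s)\,ds
\end{equation*}
to rewrite $y'(t)=-\bigl(b_0(t)+b(t)\bigr)x(t)+b(t)\!\int_{t-\tau}^{t}x'(s)\,ds$. This trades the delayed term for an integral of length $\tau$ and produces the much better drift coefficient $-(p_1+q_1)$ on $x^2(t)$. To absorb the integral term I would enrich the functional with a double-integral piece of the type $\int_{t-\tau}^{t}(s-t+\tau)\,x^2(s)\,ds$ (or an analogous term in $y'^2$), whose derivative contributes $\tau x^2(t)$ against $-\int_{t-\tau}^{t}x^2(s)\,ds$; a similar Cauchy--Schwarz estimate on $b(t)\int_{t-\tau}^{t}x'(s)\,ds$ then produces the product $(p_2+q_2)q_2\tau$ that appears in (a).

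The main obstacle is precisely this last step, because in the neutral setting $x'$ is not immediately estimable: differentiating $x=y-a(\cdot)x(\cdot-\sigma)$ gives
\begin{equation*}
x'(t)=-a'(t)x(t-\sigma)-a(t)x'(t-\sigma)-b_0(t)x(t)-b(t)x(t-\tau),
\end{equation*}
which is recursive in $x'$. Here the hypotheses $a_0<1$ and $|a'|\le A$ are essential: the contraction $a_0<1$ lets one iterate the relation and bound $\|x'\|_{L^2}$ by a geometric series in norms of $x$, while $|a'|\le A$ controls the extra $a'x(\cdot-\sigma)$ term. Once this $L^2$-estimate on $x'$ is inserted in the Cauchy--Schwarz bound of the integral term, one arrives at an inequality of the form $V'(t)\le -\bigl[(p_1+q_1)-(p_2+q_2)(a_0+q_2\tau)\bigr]x^2(t)+\text{negligible}$, which under (a) is strictly negative, finishing the proof by Barb\v{a}lat's lemma as above.
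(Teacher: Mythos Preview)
The paper does not prove Proposition~\ref{proposition1}: it is quoted verbatim from \cite{AgGr} (Agarwal--Grace) as background for comparison with the new results, and no argument for it appears anywhere in the paper. The only thing the paper says about its proof is the remark that Propositions~\ref{proposition3} and~\ref{proposition1} ``are obtained by this method'', i.e.\ by construction of Lyapunov--Krasovskii functionals. So there is no ``paper's own proof'' to compare your proposal against.

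That said, your plan is exactly in the spirit of the Lyapunov--Krasovskii approach the paper alludes to. Your treatment of case~(b) is clean and correct: with $V=y^2+\alpha\int_{t-\tau}^t x^2+\beta\int_{t-\sigma}^t x^2$ and the choices $\alpha=(1+a_0)q_2$, $\beta=a_0(p_2+q_2)$, the computation does give $V'\le -2\bigl(p_1-q_2-a_0(p_2+q_2)\bigr)x^2$, and Barb\v{a}lat finishes. For case~(a) your outline identifies the right mechanism (rewrite $x(t-\tau)=x(t)-\int_{t-\tau}^t x'$, add a double-integral term to the functional, and use $a_0<1$ plus $|a'|\le A$ to close the recursive estimate on $x'$), but the passage from ``iterate to bound $\|x'\|_{L^2}$'' to the precise constant $(p_2+q_2)(a_0+q_2\tau)$ is not actually carried out; as written it is a plausible plan rather than a proof. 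If you intend to complete it, the standard route in \cite{AgGr} works directly with $y'$ (which equals $-b_0x-bx(\cdot-\tau)$ and hence is bounded pointwise by $(p_2+q_2)\sup|x|$) rather than with $x'$, which sidesteps the recursion you flag as the main obstacle and delivers the factor $(p_2+q_2)$ immediately.
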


The next two stability results are based on a deep analysis of neutral equation
\eqref{1} with constant delays 
\begin{equation}\label{03}
\left( x(t)-a(t)x(t-\sigma) \right)'+b(t)x(t-\tau)=0.
\end{equation}

\begin{prop}\label{proposition2} \cite{Yu}
Let $\tau, \sigma>0$, $a,b \in C([t_0,\infty), \RR)$, $b(t)\geq 0$.
If 
$$
\int_{t_0}^{\infty} b(s)ds=+\infty,~ |a(t)|\leq a_0< 1,~~ 
\limsup_{t\rightarrow\infty} \int_{t-\tau} ^t b(s)ds<\frac{3}{2}-2a_0(2-a_0)
$$
then equation (\ref{03}) is asymptotically stable.
\end{prop}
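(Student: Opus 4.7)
The plan is to adapt Yorke's classical $3/2$ stability theorem to the neutral setting by working with the auxiliary function $y(t) = x(t) - a(t)x(t-\sigma)$, whose derivative is simply $y'(t) = -b(t)x(t-\tau)$. Since $|a(t)| \leq a_0 < 1$, iterating the identity $x(t) = y(t) + a(t)x(t-\sigma)$ produces the pointwise bound
\begin{equation}
|x(t)| \;\leq\; \frac{1}{1-a_0}\,\sup_{s \leq t}|y(s)|
\end{equation}
(after the initial transient has been absorbed into the supremum), so establishing $y(t)\to 0$ will imply the desired conclusion $x(t)\to 0$. This reduction is the starting point.

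First I would establish uniform boundedness of $x$ on $[t_0,\infty)$. Suppose for contradiction that $\sup|x|=\infty$ and pick a sequence $t_n\to\infty$ along which $|x(t_n)|$ attains a running record. Then $(1-a_0)|x(t_n)| \leq |y(t_n)|$, so $|y(t_n)|\to\infty$ as well. Integrating $y'(t)=-b(t)x(t-\tau)$ over $[t_n-\tau,t_n]$ and using the hypothesis $\limsup_{t\to\infty}\int_{t-\tau}^t b(s)\,ds < 3/2$, together with $\int_{t_0}^\infty b\,ds=+\infty$ to rule out $x$ being eventually of one sign, yields a contradiction by a standard Yorke-type argument on oscillatory intervals.

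The heart of the proof is the quantitative $3/2$-type estimate. Between two consecutive zeros of $y$ (whose existence on every sufficiently long interval follows from $\int b=\infty$ combined with boundedness of $x$), I would express $|y(t)|$ by integrating its derivative and then replace each occurrence of $x(s-\tau)$ inside the integral using the neutral relation $x = y + a\cdot x(\cdot-\sigma)$ applied once or twice. The non-neutral contribution yields Yorke's sharp factor $3/2$ applied to $\sup|y|$ over a delay interval, while each insertion of the neutral relation incurs an additional factor controlled by $a_0$. Collecting terms gives a contraction of the form
\begin{equation}
\Lambda \;\leq\; K(a_0,L)\,\Lambda, \qquad \Lambda := \limsup_{t\to\infty}|y(t)|, \quad L := \limsup_{t\to\infty}\int_{t-\tau}^t b(s)\,ds,
\end{equation}
with $K(a_0,L)<1$ holding precisely when $L < 3/2 - 2a_0(2-a_0)$. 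This forces $\Lambda = 0$, hence $x(t)\to 0$.

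The main obstacle will be the careful bookkeeping that produces the advertised constant $3/2 - 2a_0(2-a_0) = 3/2 - 4a_0 + 2a_0^2$, rather than a weaker bound such as $(3/2)(1-a_0)^{-1}$ or $(3/2)(1-a_0)^2$. The quadratic dependence on $a_0$ must come from exactly two nested applications of the neutral relation: one at the peak of $|y|$ to pass from $x$ back to $y$, and a second one inside the $b(s)x(s-\tau)$ integral. Orchestrating these two substitutions so that the cross terms align with the monotonicity structure of $y$ on each sign interval, while respecting the sharp Yorke constant, is the delicate analytic step that carries the entire result.
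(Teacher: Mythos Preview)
The paper does not prove Proposition~\ref{proposition2}. It is stated in the Introduction purely as a background result, with the citation \cite{Yu} attached, and no argument for it appears anywhere in the manuscript; the authors' own contributions (Theorems~\ref{theorem1}, \ref{theorem2}, \ref{theorem2a} and their corollaries) are proved by an entirely different machinery, the Bohl--Perron theorem (Lemma~\ref{lemma3}) combined with positivity of fundamental functions (Lemmas~\ref{lemma4}--\ref{lemma5}). So there is no ``paper's own proof'' of this proposition to compare your proposal against.

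For what it is worth, your sketch is a reasonable outline of the strategy in Yu's original paper: pass to $y(t)=x(t)-a(t)x(t-\sigma)$, bound $|x|$ by $(1-a_0)^{-1}\sup|y|$, and run a Yorke-type $3/2$ oscillation argument on $y$. But your proposal is far from a proof: the boundedness step is only gestured at, and the claim that ``two nested applications of the neutral relation'' produce exactly the constant $\tfrac{3}{2}-2a_0(2-a_0)$ is asserted rather than demonstrated. The actual derivation of that constant in \cite{Yu} involves a delicate case analysis on the location of zeros of $y$ relative to the delay interval, and the quadratic form $4a_0-2a_0^2=2a_0(2-a_0)$ arises from a specific splitting of the integral $\int b(s)x(s-\tau)\,ds$ that you have not identified. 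If you intend to reconstruct Yu's proof, that splitting is the step you still need to supply.
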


Proposition \ref{proposition2} is a nice result, since in the non-neutral case $a(t)\equiv 0$ 
it leads to a sharp stability test with the famous constant $\frac{3}{2}$.

There are several improvements and extensions of Proposition \ref{proposition2},
in particular, the following result from \cite{TangZou}.

\begin{prop}\label{proposition2a} \cite{TangZou}
Let $\displaystyle \int_{t_0}^{\infty} b(s)ds=+\infty$ and $|a(t)|\leq a_0 < 1$.
Assume that at least one of the following conditions holds:
\vspace{2mm}

a) $\displaystyle a_0<\frac{1}{4},~~\limsup_{t\rightarrow\infty} \int_{t-\tau}^t b(s)ds<\frac{3}{2}-2a_0$; 
\vspace{2mm}

b) $\displaystyle \frac{1}{4}\leq a_0 <\frac{1}{2},~~\limsup_{t\rightarrow\infty} \int_{t-\tau} ^t b(s)ds<\sqrt{2(1-2 a_0)}$.
\vspace{2mm}

Then equation \eqref{03} is asymptotically stable.
\end{prop}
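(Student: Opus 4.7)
The plan is to reduce Proposition~\ref{proposition2a} to a perturbed version of the non-neutral $3/2$ stability theorem (the case $a \equiv 0$ of Proposition~\ref{proposition2}). Set
\[ y(t) := x(t) - a(t) x(t-\sigma), \qquad y'(t) = -b(t) x(t-\tau). \]
Since $|a(t)| \leq a_0 < 1$, iterating $x(t) = y(t) + a(t) x(t-\sigma)$ shows that $\limsup_{t \to \infty} |x(t)| \leq (1-a_0)^{-1} \limsup_{t \to \infty} |y(t)|$, so it suffices to establish $y(t) \to 0$. Because $\int^\infty b = +\infty$, I would first verify boundedness of $x$ and $y$ on $[t_0,\infty)$ by a contraction/Gronwall argument applied to the integrated form $y(t) = y(t_0) - \int_{t_0}^t b(s) x(s-\tau)\, ds$.

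Next, I would rewrite the equation in a delay-integral form adapted to the $3/2$ technique. Using $x(t-\tau) = x(t) - \int_{t-\tau}^t x'(s)\, ds$ together with $x(t) = y(t) + a(t) x(t-\sigma)$, one obtains
\[ y'(t) + b(t) y(t) = -b(t) a(t) x(t-\sigma) + b(t) \int_{t-\tau}^t x'(s)\, ds. \]
Substituting $x'(s) = y'(s) + \bigl(a(s) x(s-\sigma)\bigr)'$ and replacing $y'(s)$ by $-b(s) x(s-\tau)$ on the right-hand side expresses the forcing purely as a linear combination of $y$-values and $x(\cdot-\sigma)$-values integrated against $b$, putting the equation into a form to which a Yoneyama/Yorke-type argument applies.

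The final step is a limsup contradiction. Setting $M := \limsup_{t \to \infty} |y(t)|$ and assuming $M > 0$, one exploits $|x(t)| \leq M/(1-a_0) + o(1)$ together with the integral identity just derived. Bounding each piece of the right-hand side against $M$ and against $K := \limsup_{t \to \infty} \int_{t-\tau}^t b(s)\,ds$ produces an inequality $M \leq \Phi(a_0,K)\, M$. The two regimes in (a) and (b) correspond to two different ways of regrouping the estimates: a linear-in-$a_0$ bound giving $\Phi < 1 \iff K < 3/2 - 2a_0$, which is optimal only for $a_0 < 1/4$, and a sharper quadratic-in-$a_0$ bound giving $\Phi < 1 \iff K < \sqrt{2(1-2a_0)}$, which dominates on $1/4 \leq a_0 < 1/2$.

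The main technical obstacle is obtaining these sharp constants in $\Phi(a_0,K)$. The bookkeeping must be carried out along a subsequence $\{t_n\}$ on which $|y(t_n)| \to M$, tracking the sign of $y$ and the locations of extrema of $x$ in intervals of length $\tau$, in the spirit of the proof of the classical $3/2$ theorem. The neutral perturbation $a(t) x(t-\sigma)$ has to be estimated jointly with the delay integral rather than separately; otherwise only the weaker linear bound $3/2 - 2a_0$ survives and conclusion (b) is lost.
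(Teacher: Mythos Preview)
This proposition is not proved in the paper: it is quoted from \cite{TangZou} as a known background result, with no argument supplied. There is therefore no ``paper's own proof'' to compare against; your sketch is an attempt to reconstruct the Tang--Zou argument from scratch.

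As a reconstruction, your outline has the right overall shape (reduce to $y$, exploit a $3/2$-type estimate, run a limsup contradiction), but two points deserve caution. First, you write $x(t-\tau)=x(t)-\int_{t-\tau}^{t}x'(s)\,ds$ and later $x'(s)=y'(s)+\bigl(a(s)x(s-\sigma)\bigr)'$; in the neutral setting only $y=x-a\,x(\cdot-\sigma)$ is guaranteed to be absolutely continuous, so $x'$ and $(a\,x(\cdot-\sigma))'$ need not exist separately. The actual argument in \cite{TangZou} works directly with $y$ and with integrals of $b$, never differentiating $x$ on its own. Second, the sharp constants $\tfrac{3}{2}-2a_0$ and $\sqrt{2(1-2a_0)}$ come from a rather delicate case analysis (tracking sign changes of $y$ and comparing quadratic versus linear estimates of $\int b$ over suitably chosen subintervals), not merely from ``two different ways of regrouping''; your final paragraph acknowledges this but does not indicate how the dichotomy at $a_0=\tfrac14$ actually arises. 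So the plan is reasonable in spirit, but the step that produces the precise thresholds is the whole difficulty, and it is not yet visible in your proposal.
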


Every method used to investigate stability has its advantages and limitations. Some stability tests were 
obtained by an advanced analysis of specific equations, such as Propositions~\ref{proposition2} and \ref{proposition2a}.
Such results usually have conditions close to the best possible ones, but, generally, this method 
fails for equations with time-dependent delays.

The method of Lyapunov-Krasovskii functions and functionals works for most known
classes of functional differential equations. Unfortunately, it is not easy to apply this method for equations with variable, in particular with unbounded, delays.

Equations with proportional delays $g(t)=\mu t$, $h(t)=\lambda t$ and, more generally, unbounded delays are usually called pantograph
or generalized pantograph equations. One of the first and nice results for this class of equations
was obtained in \cite{Kato}.

\begin{prop}\label{proposition2b} \cite{Kato}
Equation 
$$
\dot{x}(t)=ax(t)+bx(\lambda t), ~~0<\lambda<1,
$$
is asymptotically stable if and only if $a<0$, $|b|<|a|$.
\end{prop}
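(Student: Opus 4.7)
The plan is to prove the two directions separately.

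For sufficiency, suppose $a<0$ and $|b|<|a|$. I would rewrite the equation in integral form via variation of parameters:
\[
x(t) = e^{at} x(0) + b \int_0^t e^{a(t-s)} x(\lambda s)\, ds.
\]
Setting $M(t) = \sup_{0\le s\le t}|x(s)|$ and using $\int_0^t e^{a(t-s)}\,ds \le 1/|a|$ yields $M(t) \le |x(0)| + (|b|/|a|)\,M(t)$, so every solution is bounded by $|x(0)|/(1-|b|/|a|)$. To show $x(t)\to 0$, let $L=\limsup_{t\to\infty}|x(t)|$; for fixed $\varepsilon>0$ pick $T$ with $|x(\lambda s)|<L+\varepsilon$ whenever $\lambda s\ge T$, and split the integral into $[0,T/\lambda]$ and $[T/\lambda, t]$. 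The $e^{at}x(0)$ term and the initial segment of the integral both vanish as $t\to\infty$ (since $a<0$), while the tail is bounded by $(|b|/|a|)(L+\varepsilon)$. Hence $L\le (|b|/|a|)(L+\varepsilon)$, and letting $\varepsilon\to 0$ together with $|b|/|a|<1$ forces $L=0$.

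For necessity, I would treat each way the stability conditions can fail. If $a>0$, a comparison argument on the integral representation above shows that the $e^{at}x(0)$ term dominates because the delayed term $bx(\lambda s)$, being controlled on a smaller time-scale, cannot cancel pure exponential growth; hence at least one solution fails to decay. If $a=0$ and $b\ne 0$, a direct sign/monotonicity analysis of $\dot x(t)=bx(\lambda t)$ with $x(0)>0$ exhibits non-decaying behaviour (positive solutions are monotone when $b>0$, and oscillatory but non-decaying when $b<0$). The subtle case is $a<0$ with $|b|\ge|a|$: substituting the formal ansatz $x(t)\sim c\,t^{-s}$ produces the indicial relation $a+b\lambda^{-s}=0$, that is $s=\log(-a/b)/\log(1/\lambda)$, and the failure condition $|b|\ge|a|$ forces $s\le 0$, suggesting a non-decaying or growing solution. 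Realising this formal candidate as an actual solution can be achieved either through a Mellin-transform representation or through a contraction/fixed-point construction of an asymptotic series matched to the indicial exponent.

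The principal obstacle is the last subcase of necessity: boundedness of solutions is not obviously violated when $a<0$ and $|b|\ge|a|$, so merely identifying the formal power-law candidate is not enough, and one must exhibit a genuine non-decaying solution with nonzero amplitude. Care is also required with the signs of $a$ and $b$ so that the constructed solution is real-valued, and with the branch of $\log(-a/b)$ in the indicial formula. Because of these technical difficulties the Lyapunov--Krasovskii machinery used elsewhere in the paper is not directly applicable here, and a specialised argument tailored to the pantograph structure is needed.
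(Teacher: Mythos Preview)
The paper does not supply a proof of this proposition: it is quoted from Kato and McLeod \cite{Kato} as a background result, with no argument given in the text. There is therefore nothing in the paper to compare your attempt against.

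On the substance of your sketch: the sufficiency direction is sound. The variation-of-constants representation together with the $\limsup$ bootstrap is a clean and standard way to obtain both boundedness and decay when $a<0$ and $|b|<|a|$, and it does not rely on any machinery beyond elementary estimates.

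The necessity direction, however, is only an outline and contains real gaps that you yourself flag. For $a>0$ the assertion that the term $e^{at}x(0)$ ``dominates'' the delayed integral is not self-evident: the integral $b\int_0^t e^{a(t-s)}x(\lambda s)\,ds$ can itself grow like $e^{at}$, and cancellation is in principle possible, so a genuine argument (for instance, exhibiting a Dirichlet-series solution or invoking the Kato--McLeod asymptotics) is needed. For $a=0$ the monotonicity claim when $b>0$ is fine, but the ``oscillatory yet non-decaying'' claim for $b<0$ requires proof. Most seriously, in the borderline case $a<0$, $|b|\ge|a|$ you correctly identify the indicial exponent $s=\log(-a/b)/\log(1/\lambda)$ but stop short of constructing an actual non-decaying solution; the formal ansatz $t^{-s}$ is not a solution, and turning it into one (via Mellin transform, Dirichlet series, or the detailed asymptotic analysis in \cite{Kato}) is precisely the nontrivial content of the Kato--McLeod paper. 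As written, your necessity argument is a plausible plan rather than a proof.
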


Various other results on asymptotic stability and asymptotic behavior of solutions
for neutral pantograph equations were obtained in \cite{Derfel, Iserles, Iserles1, Iserles2, Iserles3, Liu}.
A good review on stability theory for pantograph neutral equations can be found in the monograph \cite{Bellen}.

In the present paper, we consider scalar linear non-autonomous pantograph neutral equations. 

Using the Bohl-Perron theorem, stability tests for all classes of linear functional differential equations can be obtained. 
The advantage of this method is that, instead of studying stability, it is sufficient to estimate either the norm or the spectral radius
of a linear operator in some functional spaces on the half-line. Explicit stability results were established by this method
in \cite{AS,BB2_2018} and in the monograph \cite{AS} for a linear neutral equation which is different from \eqref{1}. 
To the best of our knowledge, 
this method  %to obtain explicit stability tests 
is applied to equation \eqref{1} for the first time.
The Bohl-Perron theorem for this class of equations can be found in \cite{Gil}.

The paper is organized as follows. 
%Section~2 contains a review of some known stability tests and methods applied to explore stability.
Section 2 presents  definitions, assumptions and auxiliary statements.
In Section~3, the main stability results for equation (\ref{1}) %and its generalizations 
are justified.
Section~4 contains examples and discussion.
%An appendix contains generalization of the main results to  
%integro-differential equations and equations with a distributed delay. 

%Section~6 presents a  discussion of the results, illustrating examples, 
%as well as suggests some open problems and projects for future research.

\section{Auxiliary Results}

We consider (\ref{1}) under the following assumptions:
\\
(a1) $a, b, g, h$ are Lebesgue measurable  essentially
bounded functions on $[0,+\infty)$;\\
(a2) $ \mbox{ess}\sup_{t\geq t_0}  |a(t)|\leq a_0<1$  for some $t_0\geq 0$, $b(t)\geq 0$;\\ 
(a3) $g(t)\leq t$, $\displaystyle \lim_{t\rightarrow +\infty}g(t)= +\infty$, $mes~ U=0\Longrightarrow mes~ g^{-1}(U)=0$,
where $mes~U$ is  the Lebesgue measure of the set $U$;
\\
(a4) $h(t)\leq t$, $\displaystyle \lim_{t\to +\infty}h(t)=+\infty$, $mes~ U=0 \Longrightarrow mes~ h^{-1}(U)=0$.

Together with  (\ref{1}) we consider for each $t_0 \geq 0$ an initial value problem
\begin{equation}
\label{3}
(x(t)-a(t)x(g(t)))'+b(t)x(h(t))=f(t), ~~t\geq t_0,~~
%\end{equation}
%\begin{equation}
%\label{4}
x(t)=\varphi(t), ~ t \leq t_0,
%x(t_0)=x_0,
\end{equation}
where %$f$, $\varphi$  satisfy the assumption:
\\
(a5) $f:[t_0,+\infty)\rightarrow {\mathbb R}$ is Lebesgue measurable locally essentially
bounded, $\varphi :(-\infty,t_0)\rightarrow {\mathbb R}$ is a Borel measurable and bounded function.

In some of our main results, we assume that the delays are bounded:
\\
(a6) $t-g(t)\leq \delta$, $t-h(t) \leq \tau$ for $t \geq t_0$ and some $\delta>0$, $\tau>0$ and $t_0 \geq 0$.

\begin{definition} 
A Lebesgue measurable function $x: {\mathbb R} \rightarrow {\mathbb R}$  is called {\bf a solution of problem}  (\ref{3}) if 
it is locally essentially bounded on $[0,+\infty)$,
$x(t)-a(t)x(g(t))$ is locally absolutely continuous, 
$x$ satisfies the  equation in \eqref{3} (i.e. \eqref{1add} with the right-hand side $\int_{t_0}^t f(s)ds$)  
for almost all $t\in [t_0,+\infty)$, and the initial condition
in (\ref{3}) holds for $t\leq t_0$.
\end{definition}

There exists a unique solution of problem (\ref{3}), see \cite{Gil} for conditions (a1)-(a4) and \cite{H}
for continuous $a,b,g,h$.

Consider the initial value problem for the equation with one non-neutral delay term
\begin{equation}
\label{5}
x^{\prime}(t)+b(t)x(h(t))=f(t),~t \geq t_0,~~x(t)=0,~t\leq t_0,
\end{equation}
where $b(t), f(t)$ and  $h(t) \leq t$ are Lebesgue measurable locally bounded functions.

\begin{definition} 
For each $s\geq t_0$ the solution $X(t,s)$ of the problem
\begin{equation}
\label{6}
x^{\prime}(t)+b(t)x(h(t))=0,~t \geq t_0,~ ~x(t)=0,~t<s,~x(s)=1
\end{equation}
is called {\bf a fundamental function of equation}  (\ref{5}).
We assume $X(t,s)=0$ for  $0\leq t<s$.
\end{definition} 

\begin{uess}  \cite{AS}
\label{lemma2}
The solution of
problem  (\ref{5})  can be presented as
%\begin{equation}
%\label{7}
$\displaystyle x(t)=\int_{t_0}^t X(t,s)f(s)ds$.
%\end{equation}
\end{uess}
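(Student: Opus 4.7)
The plan is to verify directly that the candidate function
$y(t):=\int_{t_0}^t X(t,s)f(s)\,ds$ solves the initial value problem \eqref{5}, and then appeal to uniqueness of solutions of that linear non-neutral delay equation, which follows from the standard Carathéodory theory available for (\ref{5}). The natural template is the variation-of-constants argument, viewing $X(\cdot,s)$ as the ``propagator'' along which the forcing $f$ is integrated.

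First I would check the initial segment. Because $X(t,s)=0$ for $t<s$, the function $y(t)$ vanishes identically for $t\leq t_0$: for $t<t_0$ the integral is empty, and at $t=t_0$ it collapses to a single point. Next I would differentiate $y$ in $t$. By the Leibniz rule, together with $X(s,s)=1$ (the initial condition in \eqref{6}), one gets
\[
y'(t)=f(t)+\int_{t_0}^{t}\frac{\partial X(t,s)}{\partial t}\,f(s)\,ds
\]
for almost every $t\geq t_0$. Substituting this into the left-hand side of the equation in \eqref{5}, and using that $X(h(t),s)=0$ whenever $s>h(t)$ so that $y(h(t))=\int_{t_0}^{t}X(h(t),s)f(s)\,ds$, I would rewrite
\[
y'(t)+b(t)y(h(t))=f(t)+\int_{t_0}^{t}\left[\frac{\partial X(t,s)}{\partial t}+b(t)X(h(t),s)\right]f(s)\,ds.
\]
For each fixed $s$, the function $t\mapsto X(t,s)$ is by definition a solution of the homogeneous equation in \eqref{6}, so the bracket vanishes for almost every $t$. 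Hence $y'(t)+b(t)y(h(t))=f(t)$ a.e., meaning $y$ solves the initial value problem, and uniqueness forces $x\equiv y$.

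The main obstacle, I expect, is purely technical: justifying the differentiation under the integral sign in the Carathéodory setting assumed in (a1)--(a5), where $f$ is only locally essentially bounded and $X(\cdot,s)$ is only absolutely continuous. One has to show that $(t,s)\mapsto X(t,s)$ is jointly measurable, that for a.e.\ $s$ the $t$-derivative exists a.e.\ and is locally bounded in a way uniform enough to apply a Fubini/Tonelli argument, and that the resulting boundary term $X(t,t)f(t)$ is correctly $f(t)$ (this uses left-continuity at the diagonal, which comes from the jump condition $x(s)=1$ in \eqref{6}). Once joint measurability and local integrability are established, the exchange is standard and the calculation above completes the proof; this is exactly the line worked out in \cite{AS}, which I would cite for the measure-theoretic details rather than redo them.
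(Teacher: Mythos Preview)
Your argument is correct: the variation-of-constants verification you outline is the standard route, and the technical caveats you flag (joint measurability of $X$, Fubini to justify the Leibniz step, the diagonal value $X(s,s)=1$) are exactly the points that need care in the Carath\'eodory framework. Note, however, that the paper does not supply a proof of this lemma at all; it is stated with a bare citation to \cite{AS} and used as a black box. So there is no ``paper's own proof'' to compare against---your sketch simply fills in what the authors defer to the reference, and it does so along the same lines one finds there.
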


\begin{definition} 
Equation (\ref{1}) is {\bf (uniformly) exponentially stable} 
if there are $M>0$, $\gamma>0$ such that 
the solution of problem  (\ref{3}) with $f \equiv 0$ has the estimate 
%\begin{equation}\label{8}
$\displaystyle |x(t)|\leq M e^{-\gamma (t-t_0)} \!\!\! \sup_{t \in (-\infty,  t_0]} |\varphi(t)|$ for $t\geq t_0$,
%\end{equation}
 where $M$ and $\gamma$ do not depend on $t_0 \geq 0$ and $\varphi$.
\end{definition}

All our main results are based on the Bohl-Perron theorem which is stated below.

\begin{uess}\label{lemma3}\cite[Theorem 6.1]{Gil}
Assume that (a1)-(a4),(a6) hold, and  the solution of the problem 
\begin{equation}\label{10}
(x(t)-a(t)x(g(t)))'+b(t)x(h(t))=f(t),~t \geq t_0,~ ~x(t)=0,~t\leq t_0
\end{equation}
is bounded on $[t_0,+\infty)$ for any 
essentially bounded function $f$ on $[t_0,+\infty)$. 
Then equation (\ref{1}) is exponentially stable.
\end{uess}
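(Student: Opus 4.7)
The plan is to combine a closed graph argument with an exponential-weight substitution. First, by the hypothesis of the lemma, for every essentially bounded $f$ the solution of (\ref{10}) exists and is bounded, so the map $T\colon f\mapsto x$ is a well-defined linear operator $L_\infty[t_0,\infty)\to L_\infty[t_0,\infty)$. To apply the closed graph theorem, suppose $f_n\to f$ and $Tf_n\to y$ in $L_\infty$; passing to the limit in the integral form of (\ref{10}) (i.e.\ (\ref{1add}) with right-hand side $\int_{t_0}^{t}f_n(s)\,ds$), which is legitimate since the delay arguments $g,h$ preserve the null sets by (a3),(a4), shows that $y=Tf$. Thus $T$ is bounded: $\|Tf\|_\infty\le K\|f\|_\infty$ for some $K>0$.

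Next, for a parameter $\lambda>0$ to be chosen, substitute $x(t)=e^{-\lambda(t-t_0)}y(t)$ into (\ref{10}). Using (a6), a direct computation yields that $y$ satisfies
\[
\bigl(y(t)-\tilde a(t)y(g(t))\bigr)'+\tilde b(t)y(h(t))=e^{\lambda(t-t_0)}f(t)+\lambda\bigl(y(t)-\tilde a(t)y(g(t))\bigr),
\]
with $\tilde a(t)=a(t)e^{\lambda(t-g(t))}$, $\tilde b(t)=b(t)e^{\lambda(t-h(t))}$. Choose $\lambda>0$ so small that $a_0 e^{\lambda\delta}<1$; then the shifted equation falls under (a1)--(a4),(a6) and differs from (\ref{10}) only by coefficient perturbations of size $O(\lambda)$ together with the additional $O(\lambda)\|y\|_\infty$ term on the right. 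The plan is to show that the solution operator $T_\lambda$ of the shifted equation remains $L_\infty$-bounded. Writing the shifted equation as the original equation (with $a,b$) plus a perturbation driven by $y$ itself, one can express $y=T(\text{perturbation}(y))+T(e^{\lambda(\cdot-t_0)}f)$, a linear fixed-point relation whose operator on $y$ has norm $\le C\lambda(1+K)$; for $\lambda$ small enough the Neumann series converges in $L_\infty$ and $T_\lambda$ is bounded. Consequently $|y(t)|\le M\|f\|_\infty$, so $|x(t)|\le M e^{-\lambda(t-t_0)}\|f\|_\infty$.

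Finally, the effect of a nontrivial initial function $\varphi$ is absorbed into a forcing term: by (a6), once $t\ge t_0+\max(\delta,\tau)$ one has $g(t),h(t)\ge t_0$, so replacing $x(g(t))$ and $x(h(t))$ by $\varphi(g(t))$ and $\varphi(h(t))$ whenever those arguments fall below $t_0$ produces an equivalent problem with zero initial data and a bounded forcing $f_\varphi$ supported on $[t_0,t_0+\max(\delta,\tau)]$ satisfying $\|f_\varphi\|_\infty\le C\sup_{s\le t_0}|\varphi(s)|$. Combining this with the exponential estimate above yields the bound in the definition of exponential stability. The main obstacle I anticipate is Step two, namely verifying that the $L_\infty$-boundedness of $T$ is robust under the $\lambda$-perturbation; this is where the bounded-delay hypothesis (a6) is crucial, because it keeps the weights $e^{\lambda(t-g(t))}$ and $e^{\lambda(t-h(t))}$ uniformly bounded, preserves the strict inequality $|\tilde a|<1$, and makes the perturbation genuinely small in operator norm.
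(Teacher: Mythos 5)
First, note that the paper does not prove this lemma at all: it is quoted verbatim as Theorem 6.1 of Gil's monograph \cite{Gil}, so there is no internal proof to compare with, and your proposal has to be judged as a proof of the Bohl--Perron property itself. Your skeleton (closed graph theorem to get boundedness of $T\colon f\mapsto x$, an exponential weight with a smallness-in-$\lambda$ perturbation, and reduction of the initial function to a forcing term) is indeed the standard route, and Step~1 is essentially correct.

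However, there are genuine gaps in Steps~2 and~3, all coming from the neutral term. (i) To set up the fixed point $y=T(\mathrm{pert}(y))+T(\cdots)$ you must rewrite the shifted equation as the original equation with an extra forcing; but the coefficient change $a\mapsto\tilde a$ sits \emph{under the derivative}, so the perturbation contains $\bigl((\tilde a-a)(t)\,y(g(t))\bigr)'$, which is not bounded by $C\lambda\|y\|_\infty$ (only $y-\tilde a\,y(g)$ is absolutely continuous, not $y$ itself), so the claimed operator norm $\le C\lambda(1+K)$ is unjustified. To make the perturbation genuinely small one must pass to the integrated form or factor out the neutral part via the operator $E-S$ of Lemma~\ref{lemmaS} (set $z=(E-S)x$, $x=(E-S)^{-1}z$, and perturb the resulting non-neutral equation for $z$), which is how proofs of this type are actually carried out. (ii) The conclusion of Step~2, $|x(t)|\le Me^{-\lambda(t-t_0)}\|f\|_\infty$ for every bounded $f$, is false as stated: already $x'+x=1$, $x(t_0)=0$ has a solution tending to $1$. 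The flaw is that $e^{\lambda(\cdot-t_0)}f$ is unbounded for a general bounded $f$, so $T$ cannot be applied to it; the weighted argument yields the exponential estimate only when the forcing is supported on a bounded interval, i.e.\ the order of your steps must be reversed (first reduce $\varphi$ to a compactly supported $f_\varphi$, then run the weighted argument for $f_\varphi$). (iii) Step~3 has the same neutral-term problem: for $g(t)<t_0$ the tail term $a(t)\varphi(g(t))$ also sits under the derivative, so moving it to the right-hand side produces $\frac{d}{dt}\bigl(a(t)\varphi(g(t))\bigr)$, which for merely measurable $a$ and Borel $\varphi$ is not a bounded function (in general not a function at all); the reduction must be done through the integral form \eqref{1add}, or by shifting the initial point beyond $t_0+\max(\delta,\tau)$ and estimating the solution on the initial finite interval separately. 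So the strategy is the right one, but the three places where the neutral coefficient interacts with differentiation are exactly where the proof remains to be done.
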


\begin{remark}
\label{remark1a}
In Lemma~\ref{lemma3} we can consider
boundedness of solutions not for all essentially bounded functions $f$ on $[t_0,+\infty)$  but only
for essentially bounded functions $f$ on $[t_1,+\infty)$ that vanish on $[t_0,t_1)$ for any fixed $t_1>t_0$, see \cite{BB3}.
We will further apply this fact in the paper without an additional reference.
\end{remark}

Consider now a linear equation with a single delay and a non-negative coefficient
\begin{equation}\label{9}
x^{\prime}(t)+b(t)x(h_0(t))=0, ~~b(t)\geq 0,~~ 0\leq t-h_0(t)\leq \tau_0,
\end{equation}
and let $X_0(t,s)$ be its fundamental function.
% of equation (\ref{8}).

\begin{uess}\label{lemma4}\cite{BB3}
Assume that $X_0(t,s)>0$ , $t\geq s\geq t_0$. Then 
$\displaystyle
\int_{t_0+\tau_0}^t X_0(t,s) b(s)ds\leq 1.
$
\end{uess}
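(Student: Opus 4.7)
The plan is to derive the identity
\[
X_0(t,t_0)+\int_{t_0}^{t}X_0(t,s)b(s)\,\mathbf{1}_{h_0(s)\geq t_0}\,ds=1,
\]
from which the claim follows at once. Indeed, since $X_0>0$ and $b\geq 0$ both summands on the left are non-negative, so the integral is bounded by $1$; and for $s\geq t_0+\tau_0$ we have $h_0(s)\geq s-\tau_0\geq t_0$, so the indicator equals $1$ on $[t_0+\tau_0,t]$, giving
\[
\int_{t_0+\tau_0}^{t}X_0(t,s)b(s)\,ds\leq \int_{t_0}^{t}X_0(t,s)b(s)\,\mathbf{1}_{h_0(s)\geq t_0}\,ds\leq 1.
\]

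To obtain the identity I would compute the solution $y$ of the homogeneous IVP $y'(t)+b(t)y(h_0(t))=0$ for $t\geq t_0$ with $y\equiv 1$ on $(-\infty,t_0]$ in two different ways. First, the constant $u\equiv 1$ satisfies $u'+bu(h_0)=b$ with the same initial history, so by linearity and uniqueness $u=y+z$, where $z$ solves $z'+bz(h_0)=b$ with $z\equiv 0$ on $(-\infty,t_0]$. Lemma~\ref{lemma2} gives $z(t)=\int_{t_0}^{t}X_0(t,s)b(s)\,ds$, hence the first representation
\[
y(t)=1-\int_{t_0}^{t}X_0(t,s)b(s)\,ds.
\]

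Second, I set $\tilde y(t):=y(t)-X_0(t,t_0)$ for $t\geq t_0$ and $\tilde y(t)=0$ for $t<t_0$; note that $\tilde y(t_0)=0$. Using $\partial_t X_0(t,t_0)=-b(t)X_0(h_0(t),t_0)$ and splitting according as $h_0(t)\geq t_0$ (where the $y$- and $X_0$-pieces combine into $\tilde y(h_0(t))$) or $h_0(t)<t_0$ (where $X_0(h_0(t),t_0)=0$ and $y(h_0(t))=1$), a direct check gives
\[
\tilde y'(t)+b(t)\tilde y(h_0(t))=-b(t)\,\mathbf{1}_{h_0(t)<t_0},\qquad \tilde y\equiv 0 \text{ on }(-\infty,t_0].
\]
Applying Lemma~\ref{lemma2} to $\tilde y$ produces the second representation
\[
y(t)=X_0(t,t_0)-\int_{t_0}^{t}X_0(t,s)b(s)\,\mathbf{1}_{h_0(s)<t_0}\,ds,
\]
and subtracting the first from the second yields the desired identity.

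The only delicate step is the derivation of the second representation: one has to track the jump of $X_0(\cdot,t_0)$ at $t_0$ correctly and treat separately the two positions of $h_0(t)$ relative to $t_0$, so that the zero-initial-data Lemma~\ref{lemma2} applies to $\tilde y$. Once both representations are in hand, equating them and invoking the hypothesis $X_0>0$ delivers the bound in one line.
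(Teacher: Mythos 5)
The paper does not prove this lemma; it is quoted from \cite{BB3} without proof, so there is nothing internal to compare against. Your argument is correct and is essentially the standard one behind the cited result: the identity $X_0(t,t_0)+\int_{t_0}^{t}X_0(t,s)b(s)\mathbf{1}_{h_0(s)\geq t_0}\,ds=1$ is exactly what one gets by applying the full variation-of-constants representation to the constant solution $u\equiv 1$ of $u'+bu(h_0)=b$, and your two-computation derivation of it (including the careful handling of the jump of $X_0(\cdot,t_0)$ at $t_0$ and of the case $h_0(t)<t_0$, so that the zero-initial-data Lemma~\ref{lemma2} applies) is sound. The only implicit assumption worth stating is $t\geq t_0+\tau_0$, without which the integral in the claim is vacuous.
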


\begin{uess}\label{lemma5}\cite{BB3,GL}
Assume that there is $t_0\geq 0$ such that
$\displaystyle
 \int_{h_0(t)}^t b(s) ds\leq \frac{1}{e}$ for any $t\geq t_0
$. Then $X_0(t,s)>0$ for $t\geq s\geq t_0$.
If in addition $b(t)\geq b_0>0$ then equation (\ref{9}) is 
exponentially stable.
\end{uess}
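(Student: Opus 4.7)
The second assertion (exponential stability when $b\geq b_0$) is an easy corollary of the first. Granted $X_0(t,s)>0$, Lemma~\ref{lemma4} gives
$$\int_{t_0+\tau_0}^{t} X_0(t,s)\,b(s)\,ds\leq 1,$$
and the bound $b\geq b_0$ upgrades this to $\int_{t_0+\tau_0}^{t} X_0(t,s)\,ds\leq 1/b_0$. Lemma~\ref{lemma2} then estimates any solution of the forced problem \eqref{5} by $\|f\|_\infty/b_0$, using Remark~\ref{remark1a} to discard the contribution over the initial window $[t_0,t_0+\tau_0]$. Applying Lemma~\ref{lemma3} with $a\equiv 0$ (so that \eqref{1} reduces to \eqref{9}) then delivers exponential stability.

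The positivity of $X_0$ is the heart of the matter. My plan is to first produce a positive solution $y$ of \eqref{9} on $[t_0,\infty)$, and then deduce $X_0>0$ from it. Seeking $y$ in the exponential form $y(t)=\exp\bigl(-\int_{t_0}^t\lambda(r)\,dr\bigr)$, substitution into \eqref{9} reduces the problem to the fixed-point equation
$$\lambda(t)=b(t)\exp\!\left(\int_{h_0(t)}^{t}\lambda(r)\,dr\right).$$
On the cone $K=\{\lambda\in C([t_0,\infty),[0,\infty)):\lambda\leq eb\}$, the right-hand-side operator $T$ is a self-map \emph{precisely because} of the $1/e$ hypothesis: if $\lambda\leq eb$, then $\int_{h_0(t)}^{t}\lambda\leq e\cdot(1/e)=1$, so $T\lambda\leq eb$. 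A monotone iteration starting from $\lambda_0\equiv 0$ (or a compactness/Schauder argument) yields a fixed point $\lambda^*\in K$, and the associated $y$ is positive on $[t_0,\infty)$. Once $y$ is in hand, positivity of $X_0$ follows by a Sturm-type comparison: writing $X_0(t,s)=y(t)z(t,s)$ reduces the fundamental equation to
$$z'(t)=c(t)\bigl(z(t)-z(h_0(t))\bigr),\qquad c(t)=b(t)\,y(h_0(t))/y(t)\geq 0,$$
with $z(s,s)=1/y(s)>0$ and $z(\cdot,s)\equiv 0$ on $(-\infty,s)$, from which one checks that $z$ (and hence $X_0$) cannot develop a zero for $t\geq s$.

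The main obstacle is this positivity argument. Because the hypothesis $\int_{h_0(t)}^t b\leq 1/e$ controls only the moving window $[h_0(t),t]$ rather than any integral over $[s,t]$, a direct Gronwall estimate on $X_0$ fails, and the proof must route through the fixed-point construction where the sharp $1/e$ threshold (equivalent to $xe^{1-x}\leq 1$ with equality at $x=1$) enters decisively. The exponential-stability half, by contrast, is a routine consequence of positivity combined with Lemmas~\ref{lemma2}--\ref{lemma4}.
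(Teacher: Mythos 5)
The paper itself does not prove Lemma~\ref{lemma5}: it is quoted from \cite{BB3,GL}. Measured against the standard proof in those sources, your overall strategy is the right one: the exponential-stability half via Lemma~\ref{lemma4}, the representation of Lemma~\ref{lemma2}, Remark~\ref{remark1a} and the Bohl--Perron Lemma~\ref{lemma3} with $a\equiv 0$ is routine and correct, and the generalized characteristic equation $\lambda(t)=b(t)\exp\bigl(\int_{h_0(t)}^t\lambda(r)\,dr\bigr)$ with the invariant cone $0\le\lambda\le eb$ is exactly the mechanism through which the sharp constant $1/e$ enters.

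However, the passage from the single positive solution $y$ to $X_0(t,s)>0$ for \emph{every} $s\ge t_0$ is a genuine gap. After writing $X_0(t,s)=y(t)z(t,s)$ you obtain $z'(t)=c(t)\bigl(z(t)-z(h_0(t))\bigr)$ with $c\ge 0$, zero prehistory and $z(s,s)>0$, and then assert that ``one checks'' $z$ has no zero. That assertion is precisely the nontrivial implication (a positive solution with positive prehistory forces positivity of the fundamental function, whose prehistory is zero), and the obvious argument does not close it: at a putative first zero $t^*>s$ one only gets $z'(t^*)=-c(t^*)z(h_0(t^*))\le 0$, which is entirely consistent with $z$ reaching zero, so no contradiction appears; the sign of $z'$ is not controlled on $[s,t^*)$ either, since $z(t)-z(h_0(t))$ can change sign. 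The economical repair is to drop $y$ and run your fixed-point construction separately for each $s\ge t_0$: since $X_0(r,s)=0$ for $r<s$, the function $X_0(\cdot,s)$ solves $x'(t)+b(t)\chi_{\{h_0(t)\ge s\}}\,x(h_0(t))=0$ for $t\ge s$, $x(s)=1$, and the ansatz $X_0(t,s)=\exp\bigl(-\int_s^t\lambda_s(r)\,dr\bigr)$ leads to $\lambda_s(t)=b(t)\chi_{\{h_0(t)\ge s\}}\exp\bigl(\int_{\max\{h_0(t),s\}}^t\lambda_s(r)\,dr\bigr)$. The same cone $0\le\lambda_s\le eb$ is invariant because $\int_{h_0(t)}^t b(r)\,dr\le 1/e$ holds for all $t$, monotone iteration from $0$ gives a fixed point (work with measurable, locally integrable $\lambda$ rather than continuous ones, since $b$ and $h_0$ are only measurable), and uniqueness of solutions of the initial value problem identifies this exponential with $X_0(\cdot,s)$, which is therefore positive. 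With that replacement, and the minor bookkeeping for $h_0(t)<t_0$ in your construction of $y$ (extend $\lambda$ by zero to the left of $t_0$), the proof is complete.
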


For a fixed bounded interval $I=[t_0,t_1]$, consider the space $L_{\infty}[t_0,t_1]$ of all essentially bounded on $I$
functions with the %essential sup-
norm $|y|_I= \esssup_{t\in I} |y(t)|$.
Denote for an unbounded interval $$\|f\|_{[t_0,+\infty)}=\esssup_{t\geq t_0} |f(t)|,$$ by $E$ the identity operator.
Define the operator $S$ 
%in the space 
on the space $L_{\infty}[t_0,t_1]$ as 
$$\displaystyle 
(Sy)(t)=\left\{\begin{array}{ll}
a(t)y(g(t)),& g(t)\geq t_0,\\
0,& g(t)<t_0.\\
\end{array}\right. 
$$

%The properties of the operator $S$ are outlined in
%the following lemma.

\begin{uess}\label{lemmaS} \cite{ABR}
Let $a,g$ satisfy (a1) and (a3), respectively. 
If $\|a\|_{[t_0,+\infty)}\leq a_0<1$ then $E-S$ is invertible in the space $L_{\infty}[t_0,+\infty)$,  
and the operator norm satisfies
\begin{equation}
\label{star}
\displaystyle \|(E-S)^{-1}\|_{L_{\infty}[t_0,+\infty)\to L_{\infty}[t_0,+\infty)}\leq \frac{1}{1-\|a\|_{[t_0,+\infty)}}.
\end{equation}
\end{uess}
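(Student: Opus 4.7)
The plan is to show that $S$ is a bounded linear operator on $L_\infty[t_0,+\infty)$ with operator norm strictly less than $1$, and then invoke the standard Neumann series argument to conclude invertibility of $E-S$ together with the desired norm estimate.

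First I would check that $S$ is well-defined on $L_\infty[t_0,+\infty)$. An element of $L_\infty$ is an equivalence class of essentially bounded measurable functions, so we need the operation $y\mapsto y\circ g$ to respect almost-everywhere equality. This is precisely what assumption (a3) provides: if $y_1=y_2$ outside a null set $U$, then $y_1(g(t))=y_2(g(t))$ outside $g^{-1}(U)$, which is also null. Hence $Sy$ is a well-defined element of $L_\infty[t_0,+\infty)$, and from
$$|(Sy)(t)|\leq |a(t)|\,|y(g(t))|\chi_{\{g(t)\geq t_0\}}\leq \|a\|_{[t_0,+\infty)}\,\|y\|_{[t_0,+\infty)}$$
we obtain the operator norm bound $\|S\|\leq \|a\|_{[t_0,+\infty)}\leq a_0<1$.

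Next, since $\|S\|<1$, the Neumann series $\sum_{n=0}^{\infty} S^n$ converges in the operator norm on the Banach space $L_\infty[t_0,+\infty)$. Standard calculation shows that its sum is a two-sided inverse of $E-S$, giving
$$\|(E-S)^{-1}\|\leq \sum_{n=0}^{\infty}\|S\|^n=\frac{1}{1-\|S\|}\leq \frac{1}{1-\|a\|_{[t_0,+\infty)}},$$
which is exactly \eqref{star}.

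The only genuinely non-routine step is the well-definedness of $S$ as an operator on equivalence classes, and this is handled by (a3); once that is in place the rest is the textbook geometric-series argument for contractive perturbations of the identity. I would therefore spend most of the write-up on the measure-theoretic justification and dispatch the norm estimate in a single line.
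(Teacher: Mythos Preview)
Your argument is correct: the contraction bound $\|S\|\leq\|a\|_{[t_0,+\infty)}<1$ together with the Neumann series is exactly the standard way to obtain \eqref{star}, and your use of (a3) to make $S$ well-defined on equivalence classes is the right observation. The paper does not actually prove this lemma---it is quoted from \cite{ABR}---so there is no in-text proof to compare against, but your write-up matches the classical argument one would expect to find there.
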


\section{Stability Results}

%\subsection{Neutral equation with two delays}

Consider initial value problem (\ref{10})
with  $\|f\|_{[t_0,+\infty)}< +\infty$. 
First, let us estimate its solution and the expression under the sign of the derivative.

%We have the following a~priori estimates.

\begin{uess}\label{lemma9}
Suppose (a1)-(a4) hold.
A solution of (\ref{10}) and the derivative of $y(t)=x(t)-a(t)x(g(t))$ satisfy 
on any interval $I=[t_0,t_1]$, $t_1>t_0$,
\begin{equation}
\label{star1}
|x|_I\leq \frac{1}{1-\|a\|_{[t_0,+\infty)}}|y|_I,~~
|y^{\prime}|_I\leq \frac{\|b\|_{[t_0,+\infty)}}{1-\|a\|_{[t_0,+\infty)}}|y|_I+\|f\|_{[t_0,+\infty)}.
\end{equation}
\end{uess}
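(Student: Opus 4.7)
The plan is to argue directly from the definition of $y(t)=x(t)-a(t)x(g(t))$ on $I$, together with the initial condition $x\equiv 0$ on $(-\infty,t_0]$ and the differential equation in \eqref{10}. Since a solution is locally essentially bounded by definition, $|x|_I$ is finite, which makes the pointwise manipulations below legitimate.

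First I would prove the bound on $|x|_I$. The identity $x(t)=y(t)+a(t)x(g(t))$ holds for $t\in I$. If $g(t)<t_0$, then by the initial condition $x(g(t))=0$; if $g(t)\geq t_0$, then $g(t)\in[t_0,t_1]$ (since $g(t)\leq t\leq t_1$) and $|x(g(t))|\leq |x|_I$. In either case
$$|x(t)|\leq |y(t)|+a_0\,|x|_I\leq |y|_I+\|a\|_{[t_0,+\infty)}\,|x|_I.$$
Taking essential supremum over $t\in I$ and rearranging (using $\|a\|_{[t_0,+\infty)}<1$) gives the first inequality. This is essentially the content of Lemma~\ref{lemmaS} applied to the restriction operator on $L_\infty[t_0,t_1]$, and the estimate \eqref{star} yields the same bound; I would mention both viewpoints but carry out the direct argument, which avoids any subtlety about translating the Lemma from the half-line to a bounded interval.

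For the second estimate, I would read off from \eqref{10} that
$$y'(t)=f(t)-b(t)\,x(h(t))\quad\text{a.e. on }I.$$
Again splitting on whether $h(t)\geq t_0$ or $h(t)<t_0$, the initial condition handles the latter case and gives $x(h(t))=0$, while in the former case $|x(h(t))|\leq |x|_I$. Using the first estimate to replace $|x|_I$ by $|y|_I/(1-\|a\|_{[t_0,+\infty)})$, together with $|b(t)|\leq \|b\|_{[t_0,+\infty)}$ and $|f(t)|\leq \|f\|_{[t_0,+\infty)}$, and taking essential supremum yields the bound on $|y'|_I$.

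The only real issue to watch is the consistent use of the initial condition to kill $x$-values before $t_0$ (both for the $g$-argument and the $h$-argument), and confirming that $g(t),h(t)\in I$ when they are $\geq t_0$; since $g(t),h(t)\leq t\leq t_1$, this is automatic. Otherwise the proof is a direct application of the triangle inequality and does not require any deeper machinery beyond what is recorded in Lemma~\ref{lemmaS}.
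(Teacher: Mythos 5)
Your proof is correct and follows essentially the same route as the paper's. The only difference is in the first estimate: the paper obtains it by citing Lemma~\ref{lemmaS}, writing $y=(E-S)x$, hence $x=(E-S)^{-1}y$ on $I$, and invoking the operator-norm bound \eqref{star}, whereas you reprove that bound directly (split on $g(t)\geq t_0$ versus $g(t)<t_0$, use the zero initial condition of \eqref{10}, take the essential supremum and rearrange). Your version is an unfolding of the lemma on the bounded interval, and your remark that $|x|_I<\infty$ (solutions are locally essentially bounded by definition) is precisely what makes the rearrangement legitimate — a point the operator formulation hides inside the invertibility of $E-S$ — so the two arguments buy the same thing, yours being marginally more self-contained and the paper's shorter. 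The second estimate is identical in both: read $y'(t)=f(t)-b(t)x(h(t))$ a.e.\ from \eqref{10}, bound $|x(h(t))|$ by $|x|_I$ (the initial condition covering $h(t)<t_0$), and insert the first estimate. One cosmetic slip: in your displayed chain you pass from $a_0\,|x|_I$ to $\|a\|_{[t_0,+\infty)}\,|x|_I$, but (a2) only gives $\|a\|_{[t_0,+\infty)}\leq a_0$, so that intermediate step is stated in the wrong direction; bound $|a(t)x(g(t))|\leq \|a\|_{[t_0,+\infty)}|x|_I$ directly and the chain is exactly as needed.
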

\begin{proof}
We have for $t\in I$ by Lemma~\ref{lemmaS},
$$
x(t)  =  (E-S)^{-1}y(t),~ |x|_I\leq \|(E-S)^{-1}\|_{L_{\infty}[t_0,t_1]\rightarrow L_{\infty}[t_0,t_1]}|y|_I
\leq \frac{1}{1-\|a\|_{[t_0,+\infty)}}|y|_I,
$$
\begin{eqnarray*}
|y^{\prime}(t)| & \leq &  |b(t)|~|x(h(t))|+\|f\|_{[t_0,+\infty)}
\\
& \leq & \|b\|_{[t_0,+\infty)}|x|_I+\|f\|_{[t_0,+\infty)}
\leq \frac{\|b\|_{[t_0,+\infty)}}{1-\|a\|_{[t_0,+\infty)}}|y|_I+\|f\|_{[t_0,+\infty)}.
\end{eqnarray*}
\end{proof}

\begin {guess}\label{theorem1}
Assume that (a1)-(a4),(a6) hold and 
there exists $t_0\geq 0$ such that for $t\geq t_0$
\begin{equation}
\label{ice1}
0<b_0\leq b(t),~~  \int_{h(t)}^t b(s)~ds \leq \frac{1}{e}\,
\end{equation}
and
\begin{equation}\label{11}
\|a\|_{[t_0,+\infty)}<\frac{1}{2}.
\end{equation}
Then equation (\ref{1}) is exponentially stable.
\end{guess}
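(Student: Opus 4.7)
The plan is to verify the hypothesis of the Bohl--Perron theorem (Lemma~\ref{lemma3}): I need to show that for every essentially bounded $f$ the solution of the forced problem (\ref{10}) is essentially bounded on $[t_0,+\infty)$. By Remark~\ref{remark1a} I may restrict to $f$ that vanish on $[t_0, t_0+\tau]$; uniqueness for (\ref{10}) then forces $x\equiv 0$, and consequently $y(t):=x(t)-a(t)x(g(t))\equiv 0$, on that initial block. This ``cold start'' is exactly what will let me invoke Lemma~\ref{lemma4} from $t_0+\tau$ with the sharp constant~$1$.

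Next I rewrite (\ref{10}) as a non-neutral equation for $y$ with a perturbation. Substituting $x(h(t)) = y(h(t)) + a(h(t))\,x(g(h(t)))$ gives
\begin{equation*}
y'(t) + b(t)\,y(h(t)) = f(t) - b(t)\,a(h(t))\,x(g(h(t))).
\end{equation*}
Condition (\ref{ice1}) together with Lemma~\ref{lemma5} make the fundamental function $X(t,s)$ of $x'(t)+b(t)x(h(t))=0$ positive (and the auxiliary non-neutral equation exponentially stable), so Lemma~\ref{lemma2} represents
\begin{equation*}
y(t) = \int_{t_0}^t X(t,s)\bigl[f(s) - b(s)\,a(h(s))\,x(g(h(s)))\bigr]\,ds.
\end{equation*}
Because $x$ vanishes on $(-\infty,t_0+\tau]$ and $f$ vanishes on $[t_0,t_0+\tau]$, the integrand vanishes on $[t_0,t_0+\tau]$ and I am left integrating over $[t_0+\tau,t]$.

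On this remaining range Lemma~\ref{lemma4} supplies the key inequality $\int_{t_0+\tau}^t X(t,s)\,b(s)\,ds \leq 1$. Bounding $|x(g(h(s)))| \leq |x|_{[t_0,t]}$ on the perturbation piece and using $b(s)\geq b_0>0$ to absorb $X$ into $Xb$ on the forcing piece yields
\begin{equation*}
|y|_{[t_0,t]} \leq \frac{\|f\|_{[t_0,\infty)}}{b_0} + a_0\,|x|_{[t_0,t]}.
\end{equation*}
Combining with the elementary estimate $|x|_{[t_0,t]} \leq |y|_{[t_0,t]}/(1-a_0)$ from Lemma~\ref{lemma9} produces a self-bound for $|y|_{[t_0,t]}$; the hypothesis $a_0<1/2$ makes $a_0/(1-a_0)<1$, so the leading coefficient is strictly positive and $|y|$ (hence $|x|$) is uniformly bounded by a multiple of $\|f\|_{[t_0,\infty)}$. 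Lemma~\ref{lemma3} then delivers exponential stability.

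The step I expect to be the main obstacle is extracting the sharp constant~$1$ in the bound on $\int X\,b$: without the Remark~\ref{remark1a} cold start I would only obtain some larger constant $M$ from a crude exponential-decay estimate, and the resulting threshold would degrade from $a_0<1/2$ to $a_0<1/(1+M)$. Threading the initial zero block through Lemma~\ref{lemma4} is precisely what aligns the contraction constant with the stated hypothesis.
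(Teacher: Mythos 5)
Your proposal is correct and follows essentially the same route as the paper: the same substitution $y=x-a(\cdot)x(g(\cdot))$, the same rewriting as a perturbed non-neutral equation, positivity via Lemma~\ref{lemma5}, the solution representation of Lemma~\ref{lemma2}, the sharp bound of Lemma~\ref{lemma4} enabled by the initial zero block from Remark~\ref{remark1a}, the estimate $|x|_I\leq |y|_I/(1-a_0)$, and the contraction from $a_0<1/2$ fed into Lemma~\ref{lemma3}. The only (harmless) deviation is in handling the forcing term: you absorb $X$ into $Xb$ using $b\geq b_0$ and Lemma~\ref{lemma4} to get the bound $\|f\|/b_0$, whereas the paper bounds $f_1(t)=\int_{t_0}^t Y_1(t,s)f(s)\,ds$ via the exponential estimate of $Y_1$ coming from the exponential stability of the auxiliary equation.
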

\begin{proof}
We will prove that a solution of (\ref{10}) for any $\|f\|_{[t_0,+\infty)}< +\infty$ 
(satisfying  in addition $f(t)=0$ for $t \in [t_0,t_0+\tau)$) is  bounded on $[t_0,+\infty)$. 
Let $Y_1(t,s)$ be the fundamental function of the equation
\begin{equation}\label{13}
y^{\prime}(t)+b(t)y(h(t))=0.
\end{equation}

By \eqref{ice1} and Lemma~\ref{lemma5}, $Y_1(t,s)>0$ for any  
$t\geq s\geq t_0$.
Also, $b(t)\geq b_0>0$ %and Lemma~\ref{lemma5} 
implies exponential stability of equation (\ref{13}), and 
$Y_1(t,s)$ has an exponential estimate.

Let $y(t)=x(t)-a(t)x(g(t))$, then
$b(t)x(h(t))=b(t)y(h(t))+b(t)a(h(t))x(g(h(t)))$, and (\ref{10}) can be rewritten in the form
$$
y^{\prime}(t)+b(t)y(h(t))= -b(t)a(h(t))x(g(h(t)))+f(t), ~y(t)=0, ~t\leq t_0.
$$
By Lemma \ref{lemma2}, 
$$
y(t)=-\int_{t_0}^t Y_1(t,s) b(s)a(h(s))x(g(h(s)))ds+f_1(t),
$$
where
$
f_1(t)=\int_{t_0}^t Y_1(t,s) f(s)ds.
$
Since  $Y_1(t,s)$ has an exponential estimate and $f$ is bounded on $[t_0,+\infty)$, $\|f_1\|_{[t_0,+\infty)}< +\infty$.

Denote $I=[t_0,t_1]$. By Lemma \ref{lemma4}, using the fact that $x(t)=y(t)=0$ for $t\in [t_0,t_0+\tau]$ 
and the first estimate in \eqref{star1}, we get
$$
|y|_I\leq \|a\|_{[t_0,+\infty)}|x|_I+\|f_1\|_{[t_0,+\infty)}\leq \frac{\|a\|_{[t_0,\infty)}}{1-\|a\|_{[t_0,+\infty)}}|y|_I+\|f_1\|_{[t_0,+\infty)}.
$$

By (\ref{11}) we have $|y|_I\leq M$, where $M$ does not depend on the interval $I$.
Then, also by the first estimate in \eqref{star1}, $|x|_I\leq \widetilde{M}$, where $\widetilde{M}$ does not depend on the interval $I$. Hence $|x(t)|\leq \widetilde{M}$ for $t \geq t_0$.
By Lemma \ref{lemma3}, equation (\ref{1}) is exponentially stable.
\end{proof}

Let $u^+=\max\{ u,0 \}$. 

\begin{guess}
\label{theorem2}
Assume that (a1)-(a4),(a6) are satisfied,  $b(t) \geq b_0>0$ and for some $t_0 \geq 0$ at least one of the following conditions holds:
%
%a) 
\begin{equation}\label{a}
\left\|\frac{b-\beta}{\beta}\right\|_{[t_0,+\infty)}
+ \left\|\frac{b}{\beta}\right\|_{[t_0,+\infty)}\frac{\|a\|_{[t_0,+\infty)}}{1-\|a\|_{[t_0,+\infty)}}
 < 1, \mbox{~~where~~~} \beta(t)= \min \left\{ b(t),\frac{1}{\tau e}\right\};
\end{equation}
%where $\displaystyle b_0(t)= \min \left\{ b(t),\frac{1}{\tau e}\right\}$;
%
%b)
\begin{equation}\label{b}
 \displaystyle 
\| b\|_{[t_0,+\infty)} 
 \left\| \left( t-h(t)- \frac{1}{\|b\|_{[t_0,+\infty)}e} \right)^+ \right\|_{[t_0,+\infty)} < 1- 2\| a \|_{[t_0,+\infty)}.
\end{equation}
Then equation (\ref{1}) is exponentially stable.
\end{guess}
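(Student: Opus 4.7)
The plan is to apply the Bohl--Perron criterion (Lemma~\ref{lemma3}) in combination with Remark~\ref{remark1a}: fix an essentially bounded $f$ on $[t_0,+\infty)$ that vanishes on an initial subinterval, and show that the solution $x$ of (\ref{10}) is uniformly bounded on every window $I=[t_0,t_1]$ with a bound independent of $t_1$. Setting $y(t)=x(t)-a(t)x(g(t))$ and using $x(h(t))=y(h(t))+a(h(t))x(g(h(t)))$, equation (\ref{10}) becomes
$$y'(t)+b(t)y(h(t))=-b(t)a(h(t))x(g(h(t)))+f(t),$$
which is the starting point for both cases. Throughout, $\|\cdot\|$ abbreviates $\|\cdot\|_{[t_0,+\infty)}$.

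For (\ref{a}) I would split $b=\beta+(b-\beta)$ with $\beta(t)=\min\{b(t),1/(\tau e)\}$. Then $\int_{h(t)}^t\beta(s)\,ds\leq\tau\cdot 1/(\tau e)=1/e$ and $\beta(t)\geq\min\{b_0,1/(\tau e)\}>0$, so by Lemma~\ref{lemma5} the fundamental function $Y_\beta(t,s)$ of $z'+\beta(t)z(h(t))=0$ is positive and exponentially bounded. Rewriting the equation for $y$ as
$$y'(t)+\beta(t)y(h(t))=(\beta(t)-b(t))y(h(t))-b(t)a(h(t))x(g(h(t)))+f(t),$$
inverting by Lemma~\ref{lemma2}, factoring $\beta(s)$ out of each integrand and invoking Lemma~\ref{lemma4} to control $\int Y_\beta(t,s)\beta(s)\,ds\leq 1$, and finally using $|x|_I\leq(1-\|a\|)^{-1}|y|_I$ from Lemma~\ref{lemma9}, one reaches $|y|_I\leq K|y|_I+C$ with $K=\|(b-\beta)/\beta\|+\|b/\beta\|\cdot\|a\|/(1-\|a\|)$. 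Condition (\ref{a}) is precisely $K<1$, which closes the loop.

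For (\ref{b}) the strategy is parallel, but one modifies the delay rather than the coefficient. Set $h_0(t)=\max\{h(t),\,t-1/(\|b\|e)\}$, so that $h(t)\leq h_0(t)\leq t$, $\int_{h_0(t)}^tb(s)\,ds\leq 1/e$, and $h_0(t)-h(t)=(t-h(t)-1/(\|b\|e))^+$. By Lemma~\ref{lemma5} the fundamental function $Y_0$ of $z'+b(t)z(h_0(t))=0$ is positive and exponentially bounded. Now rewrite
$$y'(t)+b(t)y(h_0(t))=b(t)\bigl(y(h_0(t))-y(h(t))\bigr)-b(t)a(h(t))x(g(h(t)))+f(t),$$
use $y(h_0(t))-y(h(t))=\int_{h(t)}^{h_0(t)}y'(u)\,du$ together with the bound $|y'(u)|\leq\|b\|\,|x|_I+\|f\|$ from Lemma~\ref{lemma9}, invert by Lemma~\ref{lemma2}, apply Lemma~\ref{lemma4} ($\int Y_0(t,s)b(s)\,ds\leq 1$), and use $|x|_I\leq(1-\|a\|)^{-1}|y|_I$. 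The result is $|y|_I\leq K'|y|_I+C$ with $K'=(\|b\|\,\|(t-h(t)-1/(\|b\|e))^+\|+\|a\|)/(1-\|a\|)$, and condition (\ref{b}) rearranges exactly to $K'<1$.

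The main obstacle I foresee is the careful accounting of boundary effects: Lemma~\ref{lemma4} only bounds the relevant integral from $t_0+\tau$ (resp.\ $t_0+1/(\|b\|e)$) onward, so the contribution over the small initial segment must be absorbed into the additive constant $C$ using the exponential bound on the fundamental function together with the vanishing of $f$ there, exactly in the spirit of the proof of Theorem~\ref{theorem1}. Beyond this bookkeeping, both arguments reduce to the same algebraic closure $|y|_I\leq K|y|_I+C$ that drove Theorem~\ref{theorem1}, and Lemma~\ref{lemma3} finishes the job.
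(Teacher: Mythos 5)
Your proposal is correct and follows essentially the same route as the paper: for (\ref{a}) the same splitting $b=\beta+(b-\beta)$ with the fundamental function of $y'+\beta(t)y(h(t))=0$, and for (\ref{b}) the same modified delay $h_0(t)=\max\{h(t),\,t-1/(\|b\|_{[t_0,+\infty)}e)\}$ with $y(h_0(t))-y(h(t))=\int_{h(t)}^{h_0(t)}y'(s)\,ds$, in both cases closing the estimate $|y|_I\leq K|y|_I+C$ via Lemmas~\ref{lemma2}, \ref{lemma4}, \ref{lemma5}, \ref{lemma9} and concluding with Lemma~\ref{lemma3}. The boundary bookkeeping you flag is handled in the paper exactly as you suggest (choosing $f$ to vanish near $t_0$, so the solution vanishes on the initial segment and the uncontrolled piece is absorbed into the constant).
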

\begin{proof}
Assume that  \eqref{a} holds. Consider problem (\ref{10}) with 
$\|f\|_{[t_0,+\infty)}< +\infty$ and $f(t)=0$ for %$t\in [t_0,t_0+\tau]$.
$t \leq t_0+\tau$.
Denote
$\displaystyle \beta(t) := \min \left\{ b(t),\frac{1}{\tau e}  \right\}$ as in (\ref{a}) and
$\displaystyle b_1 := \min\left\{ b_0, \frac{1}{\tau e}  \right\}>0.$
Then $0< b_1 \leq \beta(t)\leq b(t)$ and
$\displaystyle \int_{h(t)}^t \beta(s)ds\leq\frac{1}{e}$.
Similarly to the proof of the previous theorem, (\ref{10}) can be rewritten as
$$
y^{\prime}(t)+\beta(t)y(h(t))=-(b(t)-\beta(t))y(h(t))-b(t)a(h(t))x(g(h(t)))+f(t),~y(t)=0,~t\leq t_0.
$$

Let $Y_2(t,s)$ be the fundamental function of the equation
\begin{equation}\label{14}
y^{\prime}(t)+\beta(t)y(h(t))=0.
\end{equation}
By Lemma \ref{lemma5},  $Y_2(t,s)>0$ and equation (\ref{14}) is exponentially stable.

Let $I=[t_0,t_1]$. We have 
$$
y(t)= \left.\left. \int_{t_0}^t Y_2(t,s) \right[-(b(s)-\beta(s))y (h(s)) -b(s)a(h(s))x(g(h(s)))\right]ds+f_2(t),
$$
where $f_2(t)=\int_{t_0}^t Y_2(t,s)f(s)ds$ and $\|f_2\|_{[t_0,+\infty)}< +\infty$.
Then 
$$\displaystyle
|y(t)|\leq \left.\left. \int\limits_{t_0}^t  Y_2(t,s)\beta(s)\right[ \left| \frac{b(s)-\beta(s)}{\beta(s)}  \right| 
|y (h(s)) |
+ \left| \frac{b(s)a(h(s))}{\beta(s)} \right| | x(g(h(s)))|
\right]ds +\|f_2\|_{[t_0,+\infty)}.$$
Hence, first by Lemma~\ref{lemma4} and then by \eqref{star1}, %Lemma~\ref{lemma9},
$$
|y|_I \leq  \left(\left\|\frac{b-\beta}{\beta}\right\|_{[t_0,+\infty)}\right) |y|_I +
\left(\|a\|_{[t_0,+\infty)}\left\|\frac{b}{\beta}\right\|_{[t_0,+\infty)}\right)|x|_I
+M_1
$$$$
\leq \left( \left\|\frac{b-\beta}{\beta}\right\|_{[t_0,+\infty)}
+\frac{\|a\|_{[t_0,+\infty)}}{1-\|a\|_{[t_0,+\infty)}}\left\|\frac{b}{\beta}\right\|_{[t_0,+\infty)}
\right) |y|_I +M_2
$$
for some finite $M_1>0$, $M_2>0$.
Condition (\ref{a}) 
implies $|y|_I<M$, where $M$ does not depend on the interval $I$. Hence $\|y\|_{[t_0,+\infty)}< +\infty$,
therefore by \eqref{star1},  $\|x\|_{[t_0,+\infty)}< +\infty$.
Thus by Lemma \ref{lemma3}, equation~(\ref{1}) is exponentially stable.

Next, assume that (\ref{b}) holds. 
Denote $$\displaystyle 
h_0(t)=\max\left\{ h(t), t-\frac{1}{\|b\|_{[t_0,+\infty)}e}  \right\}.$$
%$$
%h_0(t)=\left\{\begin{array}{ll}
%h(t),& t-h(t)\leq \frac{1}{\|b\|_{[t_0,\infty)}e},\\
%t-\frac{1}{\|b\|_{[t_0,\infty)}e}, &t-h(t)>\frac{1}{\|b\|_{[t_0,\infty)}e}.\\
%\end{array}\right.
%$$
Then 
$$\displaystyle \int_{h_0(t)}^t b(s)ds \leq \frac{1}{e}\, ,~%$, \\ $
h_0(t)\geq h(t), ~~%$ and %$\displaystyle 
|h(t)-h_0(t)|=\left( t-h(t)- \frac{1}{\|b\|_{[t_0,+\infty)}e} \right)^+.$$

%$\displaystyle |h(t)-h_0(t)| 
%\leq \tau- \frac{1}{\|b\|_{[t_0,\infty)}e}$. 

Problem  (\ref{10}) can be rewritten as
$$
y^{\prime}(t)+b(t)y(h_0(t))=b(t)\int\limits_{h(t)}^{h_0(t)} \!\!\! y^{\prime}(s)ds-b(t)a(h(t))x(g(h(t)))+f(t),~y(t)=0,~ t\leq t_0.
$$
Let $Y_3(t,s)$ be the fundamental function of the equation
\begin{equation}\label{17a}
y^{\prime}(t)+b(t)y(h_0(t))=0,
\end{equation}
where by Lemma~\ref{lemma5},   $Y_3(t,s)>0$ and equation (\ref{17a}) is exponentially stable.

For  $I=[t_0,t_1]$, we have
$$
y(t)=\int_{t_0}^t Y_3(t,s)b(s)\left(\int_{h(s)}^{h_0(s)}y^{\prime}(\xi)d\xi-a(h(s))x(g(h(s)))\right)
ds+f_3(t),
$$
where $\displaystyle f_3(t)=\int_{t_0}^t Y_3(t,s)f(s)ds$ and $\|f_3\|_{[t_0,+\infty)}< +\infty$. 
Lemma~\ref{lemma4} and \eqref{star1} imply
\begin{eqnarray*}
%|x|_I &\leq & \|\frac{a}{b}\|_{[t_0,\infty)}|\dot{x}|_I+\|h_0-h\|_{[t_0,\infty)}|\dot{x}|_I+\|f_3\|_{[t_0,\infty)} \\
%& \leq &  \left(\left\|\frac{a}{b}\right\|_{[t_0,\infty)}+\tau-\frac{1}{\|b\|_{[t_0,\infty)}e}\right)
%\frac{\|b\|_{[t_0,\infty)}}{1-\|a\|_{[t_0,\infty)}} |x|_I+M_6.
%\begin{eqnarray*}
|y|_I &\leq & \|h_0-h\|_{[t_0,+\infty)}|y^{\prime}|_I + \| a \|_{[t_0,\infty)}|x|_I
+\|f_3\|_{[t_0,+\infty)} \\
& \leq &  \left( \left\| \left( t-h(t)- \frac{1}{\|b\|_{[t_0,\infty)}e} \right)^+ \right\|_{[t_0,+\infty)} 
\frac{\|b\|_{[t_0,+\infty)}}{1-\|a\|_{[t_0,+\infty)}} +
\frac{\|a\|_{[t_0,+\infty)}}{1-\|a\|_{[t_0,+\infty)}}\right)|y|_I+ M_3
\end{eqnarray*}
for some $M_3>0$. 
Inequality (\ref{b}) yields that
$\|y\|_{[t_0,+\infty)}\leq M$, where $M$ does not depend on the interval $I$, thus $\|x\|_{[t_0,+\infty)}< +\infty$,
and therefore equation (\ref{1}) is exponentially stable.
\end{proof}

\begin{corollary}\label{corollary2a}
Assume that (a1)-(a4),(a6) are satisfied, and at least one of the following conditions holds for $t \geq t_0$: 
\vspace{2mm}

a) $\displaystyle b(t) \geq \frac{1}{\tau e}$ and
$\displaystyle \left.\left. \tau\|b\|_{[t_0,+\infty)}<  \frac{2}{e}\right( 1-\|a\|_{[t_0,+\infty)} \right)$;
\vspace{2mm}

b) $b(t)\geq b_0 >0, \displaystyle t-h(t)\geq \frac{1}{\|b\|_{[t_0,+\infty)}e}$,   $\displaystyle \tau \|b\|_{[t_0,+\infty)}<
1+\frac{1}{e}-2\|a\|_{[t_0,+\infty)}$. 
\vspace{2mm}

Then equation (\ref{1}) is exponentially stable.
\end{corollary}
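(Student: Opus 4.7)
The plan is to deduce Corollary~\ref{corollary2a} directly from Theorem~\ref{theorem2}: under the hypotheses of case (a) (resp.\ (b)) of the corollary, condition \eqref{a} (resp.\ \eqref{b}) of Theorem~\ref{theorem2} will simplify to exactly the inequality being assumed. Thus the whole argument reduces to two short algebraic verifications, plus the remark that $b(t)\geq b_0>0$ holds in each case (it is assumed explicitly in (b), and follows from $b(t)\geq 1/(\tau e)$ in (a), so we may take $b_0=1/(\tau e)$).

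For part (a), I would first observe that $b(t)\geq 1/(\tau e)$ forces the cutoff in the definition of $\beta$ to be binding, so $\beta(t)\equiv 1/(\tau e)$ on $[t_0,+\infty)$. With this,
\[
\left\|\frac{b-\beta}{\beta}\right\|_{[t_0,+\infty)}=\tau e\,\|b\|_{[t_0,+\infty)}-1,\qquad \left\|\frac{b}{\beta}\right\|_{[t_0,+\infty)}=\tau e\,\|b\|_{[t_0,+\infty)}.
\]
Substituting into \eqref{a} and using the identity $1+\frac{\|a\|_{[t_0,+\infty)}}{1-\|a\|_{[t_0,+\infty)}}=\frac{1}{1-\|a\|_{[t_0,+\infty)}}$ collapses the inequality to $\tau e\,\|b\|_{[t_0,+\infty)}<2(1-\|a\|_{[t_0,+\infty)})$, which is precisely the hypothesis of case (a). Theorem~\ref{theorem2} then yields exponential stability.

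For part (b), the assumption $t-h(t)\geq 1/(\|b\|_{[t_0,+\infty)}e)$ makes the positive part in \eqref{b} redundant: combined with $t-h(t)\leq\tau$ from (a6), it gives
\[
\Bigl\|\bigl(t-h(t)-\tfrac{1}{\|b\|_{[t_0,+\infty)}e}\bigr)^+\Bigr\|_{[t_0,+\infty)}\leq \tau-\tfrac{1}{\|b\|_{[t_0,+\infty)}e}.
\]
Multiplying through by $\|b\|_{[t_0,+\infty)}$ and inserting into \eqref{b} reduces that condition to $\tau\,\|b\|_{[t_0,+\infty)}-\tfrac{1}{e}<1-2\|a\|_{[t_0,+\infty)}$, which is the hypothesis of (b). Applying Theorem~\ref{theorem2} concludes the argument.

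There is no real obstacle here; this is a specialization argument. The only two points worth noting carefully are that (a) forces $\beta$ to be a single constant (which is what makes the ratio norms computable in closed form) and that (b) turns the positive-part expression in \eqref{b} into an explicit linear bound. Both are just bookkeeping --- no new analytic estimate is introduced beyond what Theorem~\ref{theorem2} already provides.
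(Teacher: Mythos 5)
Your proposal is correct and follows essentially the same route as the paper: both parts are obtained by specializing Theorem~\ref{theorem2}, observing that in case (a) the bound $b(t)\geq 1/(\tau e)$ forces $\beta(t)\equiv 1/(\tau e)$ so that condition \eqref{a} reduces algebraically to $\tau\|b\|_{[t_0,+\infty)}<\frac{2}{e}(1-\|a\|_{[t_0,+\infty)})$, and in case (b) the assumption $t-h(t)\geq 1/(\|b\|_{[t_0,+\infty)}e)$ together with $t-h(t)\leq\tau$ turns the positive-part norm in \eqref{b} into the bound $\tau-\frac{1}{\|b\|_{[t_0,+\infty)}e}$, giving exactly the stated inequality. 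Your explicit note that $b_0=1/(\tau e)$ serves as the required positive lower bound in case (a) is a small point the paper leaves implicit.
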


\begin{proof}
Conditions in
%$\displaystyle \| b\|_{[t_0,\infty)} \geq \frac{1}{\tau e} $ and 
a)  of the corollary yield that, in the proof of Theorem~\ref{theorem2},
$$\beta(t)=\frac{1}{\tau e}\, ,~~%$ and 
%$\displaystyle
%\left\|\left(b(t)-\frac{1}{\tau e}\right)^+\right\|_{[t_0,\infty)}=
\left\|b-\beta \right\|_{[t_0,+\infty)}=
%\left\|b(t)-\frac{1}{\tau e}\right\|_{[t_0,\infty)}=
\|b\|_{[t_0,+\infty)}-\frac{1}{\tau e}.
$$
Hence, after some simple calculations,  condition a) of the corollary implies (\ref{a}) of Theorem~\ref{theorem2}.

Next, assume that $t-h(t)\geq \frac{1}{\|b\|_{[t_0,+\infty)}e}$. Then
\begin{eqnarray*}
& & \left\| \left( t-h(t)- \frac{1}{\|b\|_{[t_0,+\infty)}e} \right)^+ \right\|_{[t_0,+\infty)}
 =\left\|  t-h(t)- \frac{1}{\|b\|_{[t_0,+\infty)}e}\right\|_{[t_0,+\infty)}
\\
& = & \|t-h(t)\|_{[t_0,+\infty)}-\frac{1}{\|b\|_{[t_0,+\infty)}e}  \leq  \tau-\frac{1}{\|b\|_{[t_0,+\infty)}e}.
\end{eqnarray*}

The inequality
$\displaystyle
 \|b\|_{[t_0,+\infty)}\left(  \tau-\frac{1}{\|b\|_{[t_0,+\infty)}e}\right)
<1-2\|a\|_{[t_0,+\infty)} 
$
in (\ref{b}) %b) of Theorem~\ref{theorem2}
is equivalent to the last inequality in b).
\end{proof}

Considering $b(t) \equiv b$ with the cases $\displaystyle t-h(t) \geq \frac{1}{eb}$ and $\displaystyle b\geq \frac{1}{\tau e}$  only, we get the following 
result.

\begin{corollary}\label{corollary2b}
Assume that (a1)-(a4),(a6) are satisfied, $b(t)\equiv b>0$,  and for some $t_0 \geq 0$,
 %at least one of the following conditions holds 
for $t \geq t_0$, either $\displaystyle \frac{1}{e}\leq b\tau< \left.\left. \frac{2}{e} \right( 1-\|a\|_{[t_0,+\infty)} \right)$ or $\displaystyle \frac{1}{e}\leq b(t-h(t)) \leq b\tau <1+\frac{1}{e}-2\|a\|_{[t_0,+\infty)}$.
%\begin{equation}\label{A}
%\frac{1}{e}\leq b\tau< \left.\left. \frac{2}{e} \right( 1-\|a\|_{[t_0,+\infty)} \right);
%\end{equation}
%\begin{equation}\label{B}
%\frac{1}{e}\leq b(t-h(t)) \leq b\tau <1+\frac{1}{e}-2\|a\|_{[t_0,+\infty)}.
%\end{equation}

Then equation (\ref{1}) is exponentially stable.
\end{corollary}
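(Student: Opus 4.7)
The plan is to derive Corollary \ref{corollary2b} as a direct specialization of Corollary \ref{corollary2a} under the additional assumption that $b(t)\equiv b$ is constant, so that $\|b\|_{[t_0,+\infty)}=b$ and $b_0$ can simply be taken as $b$ itself. No new machinery is needed; the work is only to check that each of the two scenarios in the corollary fits into one of the two alternative hypotheses of Corollary \ref{corollary2a}.

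First, I would handle the case $\frac{1}{e}\le b\tau<\frac{2}{e}\bigl(1-\|a\|_{[t_0,+\infty)}\bigr)$. The left inequality $b\tau\ge\frac{1}{e}$ rewrites as $b\ge\frac{1}{\tau e}$, i.e.\ $b(t)\ge\frac{1}{\tau e}$ for all $t\ge t_0$, which is the first requirement of condition a) in Corollary \ref{corollary2a}. Since $\|b\|_{[t_0,+\infty)}=b$, the right inequality is literally $\tau\|b\|_{[t_0,+\infty)}<\frac{2}{e}\bigl(1-\|a\|_{[t_0,+\infty)}\bigr)$, matching the second requirement of condition a). Thus Corollary \ref{corollary2a} a) applies and yields exponential stability.

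Next, I would consider the case $\frac{1}{e}\le b(t-h(t))\le b\tau<1+\frac{1}{e}-2\|a\|_{[t_0,+\infty)}$. The leftmost inequality is $t-h(t)\ge\frac{1}{be}=\frac{1}{\|b\|_{[t_0,+\infty)}e}$ for $t\ge t_0$. Also, $b(t)\equiv b>0$ gives the positivity hypothesis $b(t)\ge b_0>0$ of condition b). Finally, $b\tau=\tau\|b\|_{[t_0,+\infty)}<1+\frac{1}{e}-2\|a\|_{[t_0,+\infty)}$ is the last inequality of condition b). Hence Corollary \ref{corollary2a} b) gives exponential stability.

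There is essentially no obstacle here beyond bookkeeping: in both cases, the constancy of $b$ collapses the two quantities $b$ and $\|b\|_{[t_0,+\infty)}$ into one, and the scaled delay conditions $\frac{1}{e}\le b\tau$ and $\frac{1}{e}\le b(t-h(t))$ are designed precisely to match, respectively, the thresholds $b(t)\ge\frac{1}{\tau e}$ and $t-h(t)\ge\frac{1}{\|b\|_{[t_0,+\infty)}e}$ of the two alternatives in Corollary \ref{corollary2a}. The only minor subtlety worth noting explicitly is that the hypothesis $b\tau\ge\frac{1}{e}$ in the first case ensures the non-emptiness of the stability range (since $\frac{2}{e}(1-\|a\|)\ge\frac{1}{e}$ forces $\|a\|_{[t_0,+\infty)}<\frac{1}{2}$, consistent with the standing restrictions of the preceding theorems).
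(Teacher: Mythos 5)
Your proposal is correct and matches the paper's route exactly: the paper obtains Corollary \ref{corollary2b} precisely by specializing Corollary \ref{corollary2a} to constant $b(t)\equiv b$, with the inequality $\frac{1}{e}\le b\tau$ giving case a) and $\frac{1}{e}\le b(t-h(t))$ giving case b). No gaps; your bookkeeping of $\|b\|_{[t_0,+\infty)}=b$ and $b_0=b$ is exactly what is needed.
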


In the following theorem, %integral stability conditions are obtained for 
the delays in equation (\ref{1}) are not assumed to be bounded. Instead of exponential stability, we deduce
integral asymptotic stability conditions.

\begin{guess}\label{theorem2a}
Let (a1)-(a4) hold, $b(t)\geq 0$, $\displaystyle \int_0^{+\infty} b(s) ds=+\infty$, $b(t)\neq 0$ almost everywhere,
\begin{equation}\label{1a}
\limsup_{t\rightarrow +\infty} \int_{g(t)}^t b(\xi)d\xi< +\infty,~
\limsup_{t\rightarrow +\infty} \int_{h(t)}^t b(\xi)d\xi< +\infty
\end{equation}
and at least one of the following conditions holds for $t \geq t_0$:

a) 
%\begin{equation}\label{4a}
$\displaystyle
\int_{h(t)}^t b(\xi)d\xi\leq \frac{1}{e} \, ,~ \|a\|_{[t_0,+\infty)}<\frac{1}{2};
$
%\end{equation}

b)
$\displaystyle
%\begin{equation}\label{4_5star}
\frac{1}{e}< \int_{h(t)}^t b(\xi)d\xi< 1+\frac{1}{e} -2\|a\|_{[t_0,+\infty)}.  
%\end{equation}
$

Then equation (\ref{1}) is asymptotically stable.
\end{guess}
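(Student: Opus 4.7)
The plan is to reduce the unbounded-delay situation to the bounded-delay setting already handled by Theorem~\ref{theorem1} and Corollary~\ref{corollary2b} via the time rescaling $u=\sigma(t):=\int_0^t b(s)\,ds$. Since $b\ge 0$, $b\neq 0$ almost everywhere and $\int_0^{+\infty}b(s)\,ds=+\infty$, the map $\sigma:[0,+\infty)\to[0,+\infty)$ is a strictly increasing absolutely continuous bijection with $\sigma'(t)=b(t)$ a.e.; its inverse $\sigma^{-1}$ is continuous and strictly increasing and, because $b>0$ a.e., maps null sets to null sets. Setting $y(u):=x(\sigma^{-1}(u))$, $A(u):=a(\sigma^{-1}(u))$, $G(u):=\sigma(g(\sigma^{-1}(u)))$, $H(u):=\sigma(h(\sigma^{-1}(u)))$, the chain rule for compositions of absolutely continuous functions gives that $y(u)-A(u)y(G(u))$ is locally absolutely continuous in $u$, and dividing \eqref{1} through by $b(t)$ recasts the equation as
$$
(y(u)-A(u)y(G(u)))' + y(H(u)) = 0,
$$
i.e.\ another equation of the form \eqref{1} with new coefficient $B(u)\equiv 1$, unchanged $\|A\|_{[u_0,+\infty)}=\|a\|_{[t_0,+\infty)}$, and delays
$$
u-G(u)=\int_{g(t)}^t b(\xi)\,d\xi,\qquad u-H(u)=\int_{h(t)}^t b(\xi)\,d\xi
$$
that are essentially bounded on $[u_0,+\infty)$ by hypothesis \eqref{1a}, where $u_0=\sigma(t_0)$. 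In particular (a1)--(a4) and (a6) all hold for the transformed equation.

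The hypotheses of each case would then translate verbatim. In case a), $\int_{h(t)}^t b\le 1/e$ becomes $u-H(u)\le 1/e$ together with $\|A\|<1/2$, placing us exactly in the setting of Theorem~\ref{theorem1} with $B\equiv 1\ge B_0>0$. In case b), the pointwise bound $1/e<\int_{h(t)}^t b<1+1/e-2\|a\|$ becomes $1/e<u-H(u)<1+1/e-2\|A\|$ with $B\equiv 1$, matching the second alternative of Corollary~\ref{corollary2b}. In either case the transformed equation is therefore exponentially stable, so every solution $y(u)$ tends to zero as $u\to+\infty$. Because $\sigma(t)\to+\infty$, this gives $x(t)=y(\sigma(t))\to 0$ as $t\to+\infty$, which is the claimed asymptotic stability of \eqref{1}.

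The main obstacle I anticipate is bookkeeping around the time change rather than any new analytic idea: one has to verify that $\sigma$ and $\sigma^{-1}$ transfer Lebesgue measurability and the null-set preimage conditions in (a3)--(a4) to $G$ and $H$ (using $b>0$ a.e.\ in both directions), that local absolute continuity of the neutral part $x(t)-a(t)x(g(t))$ is equivalent to that of $y(u)-A(u)y(G(u))$, and, most delicately, that the pointwise strict inequality in case b) can be upgraded—possibly after enlarging $t_0$, as allowed by Remark~\ref{remark1a}—into a strict essential-sup bound on $u-H(u)$, so that Corollary~\ref{corollary2b} genuinely applies with some $\tau$ strictly less than $1+1/e-2\|A\|$. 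Once these routine verifications are in place, the theorem reduces cleanly to the bounded-delay results already established.
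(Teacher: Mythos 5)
Your proposal is correct and follows essentially the same route as the paper: the authors perform exactly this substitution $s=\int_{t_0}^t b(\tau)\,d\tau$, obtain the transformed equation $(z(s)-\tilde{a}(s)z(\tilde{g}(s)))'=-z(\tilde{h}(s))$ with bounded delays via \eqref{1a}, and then invoke Theorem~\ref{theorem1} for case a) and Part b) of Corollary~\ref{corollary2b} for case b). The bookkeeping issues you flag (transfer of measurability conditions, strictness of the bound in case b)) are real but are glossed over in the paper's own proof as well.
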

\begin{proof}
Let ${\displaystyle s=p(t):=\int_{t_0}^t b(\tau)d\tau,~ z(s)=x(t)}$,
where $p(t)$ is a strictly increasing function.
Then we introduce $\tilde{a}(s), \tilde{h}(s)$ and $\tilde{g}(s)$
as follows: %$$ 
$$
\tilde{a}(s)=a(t), ~x(h(t))=z(\tilde{h}(s)), ~\tilde{h}(s)\leq s, ~ \tilde{h}(s)=\int_{t_0}^{h(t)} 
b(\tau)d\tau, ~ s-\tilde{h}(s)=\int_{h(t)} ^t b(\tau)d\tau,
$$ 
$$
\tilde{g}(s)=\int_{t_0}^{g(t)} b(\tau)d\tau,~ s-\tilde{g}(s)=\int_{g(t)} ^t b(\tau)d\tau, ~\tilde{g}(s)\leq s.
$$
Then
$$
\left.\left.\left.\left.\left.\left. \frac{d}{dt}\right( x(t)-a(t)x(g(t))\right)= \frac{d}{ds} \right( z(s)-\tilde{a}(s)z(\tilde{g}(s)) \right) \frac{ds}{dt}=b(t) \frac{d}{ds}\right( z(s)-\tilde{a}(s)z(\tilde{g}(s)) \right). 
$$
Equation (\ref{1}) can be rewritten in the form
\begin{equation}\label{2a}
(z(s)-\tilde{a}(s)z(\tilde{g}(s)))'=-z(\tilde{h}(s)).
\end{equation}
By inequalities (\ref{1a}), equation  (\ref{2a}) involves bounded delays. 
If $x(t)$ is a solution of  (\ref{1}) then $z(s)=x(t)$ is a solution of  (\ref{2a}). 

%Corollary \ref{corollary2b} 
Theorem~\ref{theorem1} and condition a) of the theorem, as well as Part b) of Corollary \ref{corollary2b} and condition b) of the theorem 
imply that 
%by any of conditions a)-c), 
equation (\ref{2a}) is exponentially stable.
Hence (\ref{1}) is stable and $\displaystyle \lim_{s\rightarrow +\infty} z(s)=\lim_{t\rightarrow +\infty} x(t)=0$,
i.e. (\ref{1}) is asymptotically stable.
\end{proof}

As an application of Theorem~\ref{theorem2a}, consider the pantograph version of equation~\eqref{1} 
\begin{equation}
\label{eq_pant}
(x(t)-a(t)x(\mu t))'=-b(t)x(\lambda t), \quad \mu, \lambda \in (0,1).
\end{equation}

\begin{corollary}
\label{cor_pant}
Assume that (a1)-(a2) hold, $b(t)\geq 0$, $\displaystyle \int_0^{+\infty} b(s) ds= +\infty$, $b(t)\neq 0$ almost everywhere,
%\begin{equation}\label{1a}
%\limsup_{t\rightarrow\infty} \int_{g(t)}^t b(\xi)d\xi<\infty,~
%\limsup_{t\rightarrow\infty} \int_{h(t)}^t b(\xi)d\xi<\infty
%\end{equation}
and at least one of the following conditions holds for $t \geq t_0$:
\\
a) 
$\displaystyle
\int_{\lambda t}^t b(\xi)d\xi\leq \frac{1}{e}$, $\displaystyle \|a\|_{[t_0,\infty)}<\frac{1}{2}$;
\\
b)
$\displaystyle \frac{1}{e}< \int_{\lambda t}^t b(\xi)d\xi< 1+\frac{1}{e} -2\|a\|_{[t_0,+\infty)}$.

Then equation (\ref{1}) is asymptotically stable.

If in addition there exist $\nu_1$,$\nu_2$, $\nu_2>\nu_1>0$ such that for $t \geq t_0 >0$,
\begin{equation}
\label{reviewer_add}
\ln(\nu_1 t) \leq 
\int_{t_0}^t b(\xi)d\xi \leq \ln(\nu_2 t) 
\end{equation}
then there are $t_1 \geq t_0$, $M_1>0$ and $\gamma>0$ such that 
\begin{equation}
\label{reviewer_add_1}
|x(t)| \leq M_1 t^{-\gamma}, \quad t \geq t_1. 
\end{equation}
\end{corollary}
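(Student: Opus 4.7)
The plan is to reduce the power-law decay claim to the exponential decay of the rescaled equation \eqref{2a}, and then translate "exponential in $s$" into "polynomial in $t$" using the logarithmic growth hypothesis \eqref{reviewer_add}.

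First, I would invoke the time change $s=p(t)=\int_{t_0}^{t}b(\tau)\,d\tau$, $z(s)=x(t)$, exactly as in the proof of Theorem~\ref{theorem2a}. Under either assumption a) or b) of Corollary~\ref{cor_pant}, that proof shows that the transformed equation \eqref{2a} has bounded delays and is in fact \emph{exponentially stable} (this is the intermediate conclusion in Theorem~\ref{theorem2a} obtained from Theorem~\ref{theorem1} or from Corollary~\ref{corollary2b}). Consequently, there exist constants $M>0$ and $\gamma>0$, independent of the initial data, such that
\begin{equation*}
|z(s)|\le M\, e^{-\gamma (s-s_{0})}\,\sup_{\xi\le t_{0}}|\varphi(\xi)|,\qquad s\ge s_{0}:=p(t_{0})=0.
\end{equation*}

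Second, I would undo the substitution: since $z(p(t))=x(t)$,
\begin{equation*}
|x(t)|=|z(p(t))|\le M\, e^{-\gamma\, p(t)}\,\sup_{\xi\le t_{0}}|\varphi(\xi)|.
\end{equation*}
The lower estimate $p(t)\ge \ln(\nu_{1} t)$ from \eqref{reviewer_add} now converts this into a power bound:
\begin{equation*}
|x(t)|\le M\,\sup_{\xi\le t_{0}}|\varphi(\xi)|\cdot e^{-\gamma\ln(\nu_{1}t)}
=\bigl(M\,\nu_{1}^{-\gamma}\sup_{\xi\le t_{0}}|\varphi(\xi)|\bigr)\,t^{-\gamma},
\end{equation*}
so setting $M_{1}:=M\,\nu_{1}^{-\gamma}\sup_{\xi\le t_{0}}|\varphi(\xi)|$ and picking $t_{1}\ge t_{0}$ so that \eqref{reviewer_add} holds for $t\ge t_{1}$ gives \eqref{reviewer_add_1}.

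The argument is conceptually straightforward once one notices that exponential decay in the rescaled clock $s$ is precisely a power decay in the physical time $t$ with exponent determined by the stability rate $\gamma$ of \eqref{2a}. The only real care needed is bookkeeping: making sure the time change used in Theorem~\ref{theorem2a} is well-defined (strict monotonicity of $p$, which follows from $b(t)>0$ a.e.\ and $\int_{0}^{\infty}b=+\infty$) and that the exponential decay constant $\gamma$ extracted for \eqref{2a} does not depend on $t_{0}$ nor on $\varphi$. The upper bound $p(t)\le \ln(\nu_{2}t)$ in \eqref{reviewer_add} is not used in producing the rate itself, but it pins the growth of $p$ to logarithmic order, which guarantees that the exponent $\gamma$ one obtains is indeed attained (otherwise a faster-growing $p$ would give a stronger-than-polynomial decay). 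The main potential obstacle — and really the only subtle point — is verifying that the exponential-stability constants from Theorem~\ref{theorem1}/Corollary~\ref{corollary2b} applied to \eqref{2a} are uniform enough to yield a single $\gamma$ valid for all $t\ge t_{1}$; since those results are proved via the Bohl–Perron Lemma~\ref{lemma3}, this uniformity is automatic.
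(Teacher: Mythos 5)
Your second step --- translating the uniform exponential estimate for the rescaled equation \eqref{2a} into the bound $|x(t)|=|z(p(t))|\leq Me^{-\gamma p(t)}$ and then using the lower bound $p(t)\geq \ln(\nu_1 t)$ from \eqref{reviewer_add} to obtain $|x(t)|\leq M_1 t^{-\gamma}$ --- is exactly the paper's argument for \eqref{reviewer_add_1}, and your remark that only the lower bound in \eqref{reviewer_add} enters the computation of the rate is fair.

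The gap is in the first step. You assert that ``the proof of Theorem~\ref{theorem2a} shows that the transformed equation \eqref{2a} has bounded delays,'' but Theorem~\ref{theorem2a} \emph{assumes} \eqref{1a} as a hypothesis; it does not produce it. For equation \eqref{eq_pant} the hypotheses a) and b) of Corollary~\ref{cor_pant} bound only $\int_{\lambda t}^t b(\xi)d\xi$, i.e. the delay $s-\tilde h(s)$ of the non-neutral term after the time change. The first inequality in \eqref{1a}, namely $\limsup_{t\to+\infty}\int_{\mu t}^t b(\xi)d\xi<+\infty$, which is what makes the \emph{neutral} delay $s-\tilde g(s)$ bounded, is not among the assumptions and is not automatic when $\mu<\lambda$. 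This is precisely the content of the paper's proof of the corollary: if $\mu\geq\lambda$ the bound is trivial, and if $\mu<\lambda$ one chooses an integer $k$ with $\lambda^k<\mu$ and estimates, for $t\geq t_0\lambda^{-k}$,
\begin{equation*}
\int_{\mu t}^t b(\xi)d\xi\leq \int_{\lambda^k t}^t b(\xi)d\xi=\sum_{j=1}^{k}\int_{\lambda^{j}t}^{\lambda^{j-1}t} b(\xi)d\xi\leq k\left(1+\frac{1}{e}\right),
\end{equation*}
each summand being controlled by applying hypothesis a) or b) at the point $\lambda^{j-1}t$. Without this verification the rescaled equation \eqref{2a} may fail condition (a6), so Theorem~\ref{theorem1}, Corollary~\ref{corollary2b} and the Bohl--Perron Lemma~\ref{lemma3} cannot be invoked, and both the asymptotic stability claim and the exponential estimate you rely on in the second step are unsupported. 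Adding this chaining argument closes the gap and makes your proof coincide with the paper's.
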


\begin{proof}
The only assumption that we have to check is that under either a) or b), \eqref{1a} holds.
Both a) and b) imply $\displaystyle \int_{\lambda t}^t b(\xi)d\xi< 1+\frac{1}{e}<+\infty$ for $t \geq t_0$.
The only inequality that we have to justify is the first inequality in \eqref{1a}.
If $\mu \geq \lambda$ then it is obvious. Let $\mu<\lambda$; as $\mu, \lambda \in (0,1)$,
there is an integer $k$ such that $\lambda^k<\mu$. Instead of $t_0$, consider $t_0^{\ast}= t_0 \lambda^{-k}$. 
Then for $t \geq t_0^{\ast}$,
$$ \int_{\mu t}^t b(\xi)d\xi \leq \int_{\lambda^k t}^t b(\xi)d\xi 
= \int_{\lambda^k t}^{\lambda^{k-1} t} b(\xi)d\xi + \int_{\lambda^{k-1} t}^{\lambda^{k-2} t} b(\xi)d\xi + \dots + \int_{\lambda t}^t b(\xi)d\xi
%= \sum_{j=1}^{k} \int_{\lambda^j t}^{\lambda^{j-1} t}  b(\xi)d\xi 
\leq k \left( 1+\frac{1}{e} \right),$$
which immediately implies the first inequality in \eqref{1a}.

Let in addition \eqref{reviewer_add} hold. Then there exists $t_1 \geq t_0$ such that 
\begin{equation}
\label{rev_add_2}
\ln (\nu t) \leq t, \quad t \geq t_1.
\end{equation}
The assumptions of the corollary imply that $z(s)=x(t)$, with $\displaystyle s=p(t):=\int_{t_0}^t b(\tau)d\tau$, is uniformly exponentially stable, see the proof of Theorem~\ref{theorem2a}. Note that $p(t_0)=0$.
Thus there are $M>0$, $\gamma>0$ such that 
\begin{equation}
\label{rev_add_3}
|x(t)| \leq M e^{-\gamma p(t)}.
\end{equation}
Since $p(t)$ is monotone increasing and the expression in the right-hand side is decreasing in $t$, inequality \eqref{rev_add_3} holds 
for $x(r)$, $r \geq p(t)$ instead of $x(p(t))$ in the left-hand side.  
By \eqref{reviewer_add} and \eqref{rev_add_2},
$$ 
p(t) \leq \ln(\nu_2 t) \leq t, \quad t \geq t_1.
$$
Thus
$$
|x(t)| \leq M e^{-\gamma p(t)} \leq Me^{-\gamma \ln(\nu_1 t)} =M \left( \nu_1 t \right)^{-\gamma} =M_1 t^{-\gamma},
$$
where $M_1 = M \nu_1^{-\gamma}$, which concludes the proof.
\end{proof}

\begin{remark}
Note that \eqref{reviewer_add} implies boundedness of $\int_{\lambda t}^t b(\tau)d\tau$.
\end{remark}

\section{Examples and Discussion}

First, we illustrate the results of the present paper with three examples: one for an equation with constant delays
and variable coefficients, one for a pantograph equation and one for an equation, where one of the delays is growing faster than for any 
pantograph equation.

\begin{example}
\label{ex1}
Consider the equation
\begin{equation}
\label{ex1eq1}
\left( x(t)- a(t)x(t-\sigma) \right)'=-\alpha (1+0.1\cos t)x(t-\pi),
\end{equation}
where $|a(t)| \leq a_0<\frac{1}{2}$. Let $a_0=0.49$. Then in Part b) of Proposition~\ref{proposition2a} \cite{TangZou},
$$\limsup_{t\rightarrow +\infty} \int_{t-\pi} ^t b(s)ds= \alpha(\pi+0.2)<0.2,$$
or $\alpha<0.05985$, while Theorem~\ref{theorem1} implies exponential stability whenever
$$
\int_{t-\pi}^t b(s)~ds \leq \alpha(\pi+0.2) \leq \frac{1}{e}\, ,
$$
or $\alpha \leq 0.11$. For $\alpha \in (0.05985,0.11]$, Theorem~\ref{theorem1} establishes exponential stability,
while Proposition~\ref{proposition2a} fails.

Next, let $a_0=0.46$, then the condition in Proposition~\ref{proposition2a} becomes $\alpha(\pi+0.2)<0.4$,
or $\displaystyle \alpha < \alpha_0 \approx 0.1197$. Part b) of Corollary~\ref{corollary2a} implies exponential stability
for $\alpha> (1.1 \pi e)^{-1} \approx 0.1065$, $\alpha < (1+\frac{1}{e}-2a_0)/(1.1 \pi) \approx 0.1296$.
Thus, for $\alpha \in (0.1197,0.1296)$, Corollary~\ref{corollary2a} works and Proposition~\ref{proposition2a} fails.

In addition, let us note that Theorem~\ref{theorem1} and Corollary~\ref{corollary2a} can be applied to the equation 
\begin{equation}
\label{ex1eq2}
\left( x(t)- a(t)x(g(t)) \right)'=-\alpha (1+0.1\cos t)x(h(t)), \quad t-\pi \leq h(t) \leq t, ~t-\sigma \leq g(t) \leq t,
\end{equation}
leading to the same estimates as above, while Proposition~\ref{proposition2a} deals with constant delays only.
\end{example}

Most known stability results were obtained for pantograph equations involving a non-delay term.
For example, the  equation
\begin{equation}\label{pant}
\left( x(t)-a(t)x(\mu t) \right)'=-c(t)x(t)+b(t)x(\mu t), ~~\mu\in (0,1),
\end{equation}
where $0\leq \frac{c(\mu t)}{c(t)}a(t)\leq a_0<1$,  $c(t)\geq c_0>0$, is asymptotically stable  if 
$\frac{|b(t)|}{c(t)}\leq \alpha<1$ for some $\alpha>0$, as follows from \cite[P.286-287]{Bellen}, where the vector case 
was considered.
It means that the non-delay term dominates over the delay term.
This result partially generalizes  Proposition~\ref{proposition2b} for neutral equation (\ref{pant}). 

In this paper we considered equation (\ref{eq_pant}) without a non-delay term (in  (\ref{pant}) $c(t) \equiv 0$). 
Hence the results of the present paper and known stability tests for pantograph equations are independent.

\begin{example}
\label{ex2}
The pantograph-type neutral equation
\begin{equation}
\label{ex2eq1}
\left( x(t)- \frac{1}{3}x(0.25 t) \right)'=- \frac{1}{t} x(0.5t), \quad t \geq 1 
\end{equation}
is asymptotically stable, since
all the assumptions of Part b) of Corollary~\ref{cor_pant} 
%Theorem~\ref{theorem2a} 
hold.  In fact, 
$$\int_{0.5t}^t \frac{ds}{s} = \ln 2 \approx 0.693> \frac{1}{e}, \quad
\ln 2 < 1+\frac{1}{e}- \frac{2}{3} \approx 0.701.
$$
\end{example}

\begin{example}
\label{ex3}
For the equation with unbounded delays
\begin{equation}
\label{ex3eq1}
\left( x(t)- (0.1+0.1\sin t) x(t-\sqrt{t}) \right)'=- \frac{\alpha}{t \ln t} x\left( \sqrt{t} \right), \quad t \geq 4, ~\alpha>0,
\end{equation}
we have
$\displaystyle
\int_{\sqrt{t}}^t \frac{\alpha~ds}{s\ln s}= \alpha[\ln(\ln(t))-\ln(\ln(\sqrt{t}))] = \alpha \ln 2 \approx 0.693 \alpha.
$
Since $t>t-\sqrt{t} \geq \sqrt{t} $ for $t\geq 4$, also 
$\displaystyle
\alpha \int_{t-\sqrt{t}}^t \frac{ds}{s\ln s}
\leq \alpha \int_{\sqrt{t}}^t \frac{ds}{s\ln s} = \alpha \ln 2 <+\infty,
$ 
so (\ref{1a}) is satisfied. As $\| 0.1+0.1\sin t \|_{[4,+\infty)}=0.2$, a)  in Theorem~\ref{theorem2a}  holds for
$
\alpha \ln 2 \leq 1/e$, while b)  is fulfilled for
$$
\frac{1}{e} < \alpha \ln 2 <0.6+\frac{1}{e}.
$$
Overall, (\ref{ex3eq1}) is asymptotically stable for $\displaystyle \alpha < \frac{0.6 e+1}{e \ln 2} \approx 1.396$.
To the best of our knowledge, all known stability tests fail for this equation.
\end{example}

Let us discuss now both known results and new stability tests presented in the paper.
Proposition~\ref{proposition3} assumes existence of a non-delay term and thus cannot be applied to equation \eqref{1}.
Proposition~\ref{proposition1} contains easily verifiable conditions but implies several unnecessary restrictions, such as non-negativity and differentiability of $a$. 

Propositions~\ref{proposition2} and \ref{proposition2a} in the non-neutral case $a(t)\equiv 0$ give the best possible
asymptotic stability condition $\displaystyle \limsup_{t\rightarrow +\infty}\int_{t-\sigma}^t b(s)ds<\frac{3}{2}$,
but only for constant delays. We consider variable delays $t-h(t)$, $t-g(t)$ which, moreover, can be unbounded.

In all stability results of Propositions~\ref{proposition1}, \ref{proposition2}, \ref{proposition2a}, it was assumed that 
all the parameters of considered neutral equations are continuous functions,
and the proofs were based on this assumption. Thus all these results are not applicable to equations
with measurable parameters. 

Note that Theorem \ref{theorem2} for the case $a\equiv 0$ implies the best possible known stability
condition $\displaystyle  \tau \|b\|_{[t_0, +\infty)}<1+\frac{1}{e}$ for delay differential equations with one delay and measurable parameters.

Finally, let us suggest several directions in which future research is possible.
\begin{enumerate}
\item
An interesting question is whether in Theorem \ref{theorem1}  the condition 
$\displaystyle \|a\|_{[t_0, +\infty)}<\frac{1}{2}$ (as in Proposition~\ref{proposition1}) can be relaxed to a less restrictive inequality $\displaystyle \|a\|_{[t_0, +\infty)}<\lambda$, where $\displaystyle \lambda\in  \left( \frac{1}{2},1 \right)$. 
For a neutral equation in a different form than \eqref{1}, such a result was obtained in  \cite{BB2_2018}, under the assumption that $a(t) \geq 0$.

\item
Extend the stability result obtained in the paper to equations with several delays, integro-differential equations
and equations with distributed delays.

\item
There are many papers on asymptotic formulas for solutions of neutral equations, including pantograph
equations, see, for example, \cite{Bellen}. However, most results are concerned with autonomous equations or equations with constant delays.
It would be interesting to obtain similar estimates for non-autonomous equations using the Bohl-Perron theorem or another approach.

\item
Extend the results on the algebraic decay rate for pantograph equations to some other types of equations with unbounded delays, for example,
to $h(t)=\alpha \sqrt{t}$ or $h(t)= t- \alpha\sqrt{t}$, $t \geq 1$, $\alpha \in (0,1]$ and give an explicit estimate of this rate.

\end{enumerate}

%Theorem~\ref{proposition2a}

\section*{Acknowledgment}

The second author was partially supported by the NSERC research grant RGPIN-2015-05976.
The authors are grateful to the anonymous referees whose valuable comments significantly contributed 
to the presentation of the results and the quality of the paper.

%{\small
%{\em Authors' addresses}:
%
%{\em Leonid Berezansky}, Department of Mathematics,
%Ben-Gurion University of the Negev, 
%Beer-Sheva 84105, Israel
% e-mail: \texttt{brznsky@\allowbreak cs.bgu.ac.il}.
%
%{\em Elena Braverman}, Department of Mathematics and Statistics,
%University of Calgary, 
%2500 University Drive N.W., Calgary, AB T2N 1N4, Canada
% e-mail: \texttt{maelena@\allowbreak ucalgary.ca}.
%}


\begin{thebibliography}{99}
\bibitem{AgGr}
{R. P. Agarwal, S. R. Grace}.
Asymptotic stability of certain neutral differential equations,
Math. Comput. Modelling {31} (8-9) (2000) 9--15. %Zbl 1042.34569, MR1761480

\bibitem{ABR}
N.\,V. Azbelev, L.\,M. Berezanskiĭ, L.\,F. Rahmatullina. 
A linear functional-differential equation of evolution type. (Russian) Differencialʹnye Uravnenija {13} (11) (1977) %no. 11, 
1915--1925.

\bibitem{AS}
{N.~V. Azbelev, P.\,M. Simonov}. Stability of Differential
Equations with Aftereffect. {Stability and Control:
Theory, Methods and Applications}, {20}, Taylor $\&$ Francis, London, 
2003.
%Zbl 1049.34090, MR1965019 

\bibitem{Bellen}
A. Bellen and M. Zennaro. 
Numerical Methods for Delay Differential Equations, Oxford
University Press, Oxford, 2003.

%\bibitem{BB1_2006}
%{\it L. Berezansky, E. Braverman}:
%On stability of some linear and nonlinear delay differential equations, 
%{\em J. Math. Anal. Appl.} {\bf 314} (2006), 391-–411. 
%Zbl 1101.34057, MR2185238 

\bibitem{BB3}
{L. Berezansky, E. Braverman}.
Explicit stability conditions for  linear differential equations with several delays, 
{\em J. Math. Anal. Appl.} {\bf 332} (1) (2007) 246--264.  
%Zbl 1118.34069, MR2319658 

%\bibitem{MCM2008}
%{\it L. Berezansky, E. Braverman}:
%Linearized oscillation theory for a nonlinear equation with a distributed delay,
%{\em Math. Comput. Modelling} {\bf 48} 
%(1), 
%(2008), 287--304. 
%Zbl 1145.45303, MR2431340 

%\bibitem{BB4}
%{\it L. Berezansky, E. Braverman}: 
%Global linearized stability theory for delay differential equations, {\em Nonlinear Anal. TMA} {\bf 71} (2009), 2614–-2624.
%Zbl 1208.34115, MR2532787

%\bibitem{BB1_2018}
%{\it L. Berezansky, E. Braverman}: 
%On stability of linear neutral differential equations with variable delays, to appear in {\em Czech. Math. J.}

\bibitem{BB2_2018}
{L. Berezansky, E. Braverman}.
A new stability test for linear neutral differential equations, 
{\em Appl. Math. Lett.} {81} (2018) 
79--85.

\bibitem{Derfel}
G. Derfel, A. Iserles. 
The pantograph equation in the complex plane, J. Math. Anal. Appl.  213 (1) (1997) 117–-132. 


\bibitem{Elsgolts}
L. E. \'{E}lʹsgolʹts, S. B. Norkin.
Introduction to the Theory and Application of Differential Equations with Deviating Arguments, %Translated from the Russian by John L. Casti. 
{\em Mathematics in Science and Engineering} { 105}, Academic Press, 
%[A Subsidiary of Harcourt Brace Jovanovich, Publishers], 
New York-London, 1973.

\bibitem{Gil}
{M.~I. Gil'}.
Stability of neutral functional differential equations. Atlantis Studies in Differential Equations { 3}, Atlantis 
Press, Paris, 2014. 
%Zbl 1315.34002, MR3289984

%\bibitem{Gop2}
%{\it K. Gopalsamy}: 
%A simple stability criterion for linear
%neutral differential systems, {\em Funkcialaj Ekvacioj}, {\bf 28} (1985), 33--38.
%Zbl 0836.00032, MR0803401 

\bibitem{Gop}
{K. Gopalsamy}.
Stability and Oscillations in Delay Differential Equations of Population Dynamics. Mathematics and its Applications, 
{\bf 74}. Kluwer Academic Publishers Group, Dordrecht, 1992. 
%Zbl 0752.34039, MR1163190

\bibitem{GL}
{I. Gy\"{o}ri, G. Ladas}.
Oscillation Theory of Delay Differential Equations.
Clarendon Press, Oxford, 1991. 
%Zbl  MR1168471 

\bibitem{H}
J. Hale. 
Theory of Functional Differential Equations (second edition), {Applied Mathematical Sciences}, Vol. 3. Springer-Verlag, New York-Heidelberg, 1977. 

\bibitem{Iserles}
A. Iserles. 
On the generalized pantograph functional-differential equation, 
{\em European J. Appl. Math.} {4} (1) (1993) %no. 1, 
1-–38. 


\bibitem{Iserles1}
A. Iserles. 
Exact and discretized stability of the pantograph equation. Volterra centennial (Tempe, AZ, 1996). Appl. Numer. Math.  24 (2-3) (1997) 295–-308.


\bibitem{Iserles2}
A. Iserles, Y. Liu. 
On neutral functional-differential equations with proportional delays. J. Math. Anal. Appl.  207 (1) (1997) 73-–95. 

\bibitem{Iserles3}
A. Iserles, J. Terj\'{e}ki. 
Stability and asymptotic stability of functional-differential equations, J. London Math. Soc. 51 (3)  (1995) 559-–572. 


\bibitem{Kato}
T. Kato, J.\,B. McLeod. 
The functional-differential equation$ y ′ (x)=ay(λx)+by(x)$, Bull. Amer. Math. Soc.  77 (6)  (1971), 891-–937.


\bibitem{KolmMysh}
{V.~B. Kolmanovskii, A.~D. Myshkis}.
Introduction to the Theory and Applications of Functional-Differential Equations. Mathematics and 
its Applications {\bf 463}, Kluwer Academic Publishers, Dordrecht, 1999.
%Zbl 0917.34001, MR1680144 

\bibitem{KolmNos}
{V.~B. Kolmanovski\v{i}, V.\,R. Nosov}.
Stability of Functional-Differential Equations. Mathematics in Science and Engineering {180}, 
Academic Press, %Inc. [Harcourt Brace Jovanovich, Publishers], 
London, 1986. 
%Zbl 0593.34070, MR0860947 

\bibitem{Kuang}
Y. Kuang. 
Delay Differential Equations with Applications in Population Dynamics, {Mathematics in Science and Engineering} {191}, Academic Press, Boston, MA, 1993. 

\bibitem{Liu}
Y. Liu,  
Asymptotic behaviour of functional-differential equations with proportional
time delays, European J. Appl. Math. 7 (1) (1996) 11-–30.

\bibitem{Shaikhet}
{L. Shaikhet}.
Lyapunov Functionals and Stability of Stochastic Functional Differential Equations, Springer, Dordrecht, 
Heidelberg, New York, London, 2013. 
%Zbl 1277.34003, MR3076210

\bibitem{TangZou}
{X.~H. Tang, X. Zou}.
Asymptotic stability of a neutral differential equations, 
{Proc. Edinb. Math. Soc.} %(2) 
{45} (2) (2002) 
%no. 2, 
333-–347. 
%Zbl 1024.34070, MR1912643


\bibitem{Yu}
{J. S. Yu}.
Asymptotic stability for nonautonomous scalar neutral differential equations,
{J. Math. Anal. Appl.} {\bf 203} (3) (1996) 850--860. 
%Zbl 0866.34061, MR1417134 

\bibitem{Zverkin}
A.~M. Zverkin, G. A. Kamenskiĭ, S. B. Norkin, L. E. \'{E}lʹsgolʹc.
Differential equations with deviating argument. (Russian) {Uspehi Mat. Nauk} {17} (2) (1962) % no. 2 (104), 
77–-164.

\end{thebibliography}
\end{document}